\definecolor{tianred}{rgb}{0.79, 0.17, 0.57}                                   
\definecolor{tianblue}{rgb}{0.0, 0.22, 0.66}                                   
\definecolor{tianpink}{rgb}{0.88, 0.56, 0.59}                                  
\definecolor{tiangreen}{rgb}{0.24, 0.82, 0.44}                                 
\DeclareSymbolFont{cyrletters}{OT2}{wncyr}{m}{n}
\DeclareMathSymbol{\RBe}{\mathalpha}{cyrletters}{"42}                          
\DeclareMathSymbol{\Che}{\mathalpha}{cyrletters}{"51}                          
\DeclareMathSymbol{\Sha}{\mathalpha}{cyrletters}{"58}                          
\DeclareRobustCommand\widecheck[1]{{\mathpalette\@widecheck{#1}}}
\def\@widecheck#1#2{%
    \setbox\z@\hbox{\m@th$#1#2$}%
    \setbox\tw@\hbox{\m@th$#1%
       \widehat{%
          \vrule\@width\z@\@height\ht\z@
          \vrule\@height\z@\@width\wd\z@}$}%
    \dp\tw@-\ht\z@
    \@tempdima\ht\z@ \advance\@tempdima2\ht\tw@ \divide\@tempdima\thr@@
    \setbox\tw@\hbox{%
       \raise\@tempdima\hbox{\scalebox{1}[-1]{\lower\@tempdima\box
\tw@}}}%
    {\ooalign{\box\tw@ \cr \box\z@}}}
\theoremstyle{plain}      \newtheorem{thm}{Theorem}[section]                   
\theoremstyle{plain}      \newtheorem{lem}[thm]{Lemma}                         
\theoremstyle{plain}                           
\theoremstyle{plain}      \newtheorem{prop}[thm]{Proposition}                  
\theoremstyle{plain}      \newtheorem{conjecture}[thm]{Conjecture}             
\theoremstyle{definition} \newtheorem{rmk}[thm]{Remark}                        
\theoremstyle{definition} \newtheorem{df}[thm]{Definition}                     
\theoremstyle{definition} \newtheorem{eg}[thm]{Example}                        
\theoremstyle{definition}                        
\theoremstyle{definition} \newtheorem{hypo}[thm]{Hypothesis}                   
\theoremstyle{definition}                        
\theoremstyle{definition}                    
\theoremstyle{definition}                  
\theoremstyle{definition}          
\theoremstyle{definition}              
\theoremstyle{definition}                  
\theoremstyle{definition} \newtheorem{prop-def}[thm]{Proposition-Definition}    
\theoremstyle{definition}
\newtheorem*{construction*}{Construction}                                      
\newtheorem*{conjecture*}{Conjecture}                                          
\newtheorem*{hypothesis*}{Hypothesis}                                          
\newtheorem*{convention*}{Convention}                                          
\newtheorem*{notation*}{Notation}                                              
\newtheorem*{summary*}{Summary}                                                
\newtheorem*{qt*}{Question}                                                    
\newtheorem*{rmk*}{Remark}                                                     
\newtheorem*{fact*}{Fact}                                                      
\newtheorem*{lizi*}{Example}                                                   
\newtheorem*{def*}{Definition}                                                  
\theoremstyle{plain}
\newtheorem*{thm*}{Theorem}                                                    
\crefname{thm}{Theorem}{Theorems}                                              %
\crefname{thme}{Th\'eo\`eme}{Th\'eo\`emes}
\crefname{lem}{Lemma}{Lemmas}
\crefname{lemme}{Lemme}{Lemmes}
\crefname{eg}{Example}{Examples}
\crefname{ege}{Exemple}{Exemples}
\crefname{rmk}{Remark}{Remarks}
\crefname{rmke}{Remarque}{Remarques}
\crefname{cor}{Corollary}{Corollaries}
\crefname{core}{Corollaire}{Corollaires}
\crefname{def}{Definition}{Definitions}
\crefname{defe}{D\'efinition}{D\'efinitions}
\crefname{question}{Question}{Questions}
\crefname{prop}{Proposition}{Propositions}
\crefname{conjecture}{Conjecture}{Conjectures}
\newtheoremstyle{subsection-tweak}
{}{}{}{}{\bfseries}{}{.5em}{\thmnumber{(\@{#1}{}\@{#2}).}\thmnote{~{\bfseries#3.}}}
\theoremstyle{subsection-tweak}
\newtheorem{para}[thm]{}
\newcommand{\benum}{\begin{enumerate}[label={{\upshape(\alph*)}}]}             
\newcommand{\benuma}{\begin{enumerate}[label={{\upshape(\arabic*)}}]}          
\newcommand{\benumr}{\begin{enumerate}[label={{\upshape(\roman*)}}]}           
\newcommand{\eenum}{\end{enumerate}}
\newcommand{\bconj}{\begin{conjecture}}
\newcommand{\econj}{\end{conjecture}}
\newcommand{\bconjnn}{\begin{conjecture*}}
\newcommand{\econjnn}{\end{conjecture*}}
\newcommand{\begs}{\begin{eg}\hfill\benuma}                                    
\newcommand{\eegs}{\eenum\end{eg}}                                      
\newcommand{\brmks}{\begin{rmk}\hfill\benuma}                                  
\newcommand{\ermks}{\eenum\end{rmk}}                                           
\newcommand{\bitem}{\begin{itemize}}                                           
\newcommand{\eitem}{\end{itemize}}                                             
\newcommand{\be}{\begin{equation}}                                             
\newcommand{\ee}{\end{equation}}                                               
\newcommand{\benn}{\begin{equation*}}                                          
\newcommand{\eenn}{\end{equation*}}                                            
\newcommand{\bqt}{\begin{qt*}\rm}                                              
\newcommand{\eqt}{\end{qt*}}                                                   
\newcommand{\bqtr}{\begin{qt*}\rm\coLR}                                        
\newcommand{\eqtr}{\end{qt*}}                                                  
\newcommand{\beac}{\begin{equation}\begin{array}{c}}                           
\newcommand{\eeac}{\end{array}\end{equation}}                                  
\newcommand{\beqn}{\begin{eqnarray*}}
\newcommand{\eeqn}{\end{eqnarray*}}
\newcommand{\bdf}{\begin{df}}
\newcommand{\bdfhf}{\begin{df}\hfill}
\newcommand{\edf}{\end{df}}
\newcommand{\brmk}{\begin{rmk}}
\newcommand{\brmkhf}{\begin{rmk}\hfill}
\newcommand{\ermk}{\end{rmk}}
\newcommand{\mrm}{\mathrm}
   \newcommand{\BX}{\mathbf{X}}
\newcommand{\BBG}{\mathbb{G}}
\newcommand{\ol}{\overline}                                                    
\newcommand{\ce}{\colonequals}                                                 
\newcommand{\uun}{^{(1)}}                                                      
\DeclareMathOperator{\gal}{Gal}                                                
\DeclareMathOperator{\Image}{Im}                                               
\renewcommand{\Im}{\Image}                                                     
\DeclareMathOperator{\Ker}{Ker}                                                
\DeclareMathOperator{\cone}{cone}                                              
\DeclareMathOperator{\GL}{\mathbf{GL}}                                         
\DeclareMathOperator{\SL}{\mathbf{SL}}                                         
\newcommand{\Gss}{G^{\mathrm{ss}}}                                             
\newcommand{\Gsc}{G^{\mathrm{sc}}}                                             
\newcommand{\Tsc}{T^{\mathrm{sc}}}                                             
\DeclareMathOperator{\Res}{Res}                                                
\newcommand{\wrt}{with respect to~}
\begin{document}
\title{\textbf{On Weak Approximation of Reductive Groups over Higher Dimensional Function Fields}}
\author{Zhongda LI, Che LIU, Haoxiang PAN}
\maketitle

\begin{abstract}
   Let $k$ be a $d$-local field of characteristic 0, and let $K$ be the function field of a nice curve over $k$. We give a defect to weak approximation for reductive groups over $K$ using arithmetic dualities.

\end{abstract}

\section{Introduction}

The present article aims to describe the defect to 
weak approximation for reductive groups over the function field of a 
curve over some $d$-local field (see \ref{dlocal} for its definition). 
This work generalizes the previous works of 
Harari--Scheiderer--Szamuely \cite{HSS15} 
and Izquierdo \cite{Izq16these} on tori over these function fields, 
and the work of Tian \cite{Tia21WA} on reductive groups over $p$-adic function fields.
Our strategy is to attach each connected reductive group $G$ with
a two-term short complex $C$ of tori as in \cites{Bor98, Dem11, Tia21WA}.
Subsequently, arithmetic dualities enable us to deduce an exact sequence of groups which describes the desired defect to weak approximation.

Let $K$ be the function field of a nice curve $X$ defined over some $d$-local field.
Let $G$ be a connected reductive group over $K$.
However, in extending these works to $G$, 
it turns out that $\Sha^2(\BBG_m)\not=0$ could happen when $d\ge 2$.
As mentioned in \cite{Izq16these}*{Chapitre II, \S1},
we need the following technical assumption:

\begin{hypo}\label{hypo:Sha2Vanish}
Let $T\subset G$ be a fixed maximal $K$-torus and 
let $L|K$ be a fixed finite Galois extension that splits $T$.
Assume $\Sha^2(L,\BBG_{m})=0$. According to \cite{Izq16these}*{Chapitre~1, Lemme~3.20}, the vanishing $\Sha^2(L,\BBG_{m})=0$ is equivalent to $\Sha^{d+2}(L,\mathbb{Z}(d))=0$. 
\end{hypo}

We point out that $\Sha^2(\BBG_{m})=0$ automatically holds for any number field by global class field theory.
Moreover, when $d=1$, 
one even has $\Sha^2_{\omega}(T)=0$ for any torus $T$ 
by \cite{HSS15}*{Lemma 4.2}.
These phenomena suggest that the assumption $\Sha^2(L,\BBG_{m})=0$ is quite natural.
For more detailed discussion on the nullity of $\Sha^2(\mathbb{G}_m)$, see \cite{Izq16these}*{Chapitre~1, Section~1}. 
Now let $\tilde{C}$ be the dual of $C$ (see (\ref{ShortComplex}) below).
With \Cref{hypo:Sha2Vanish}, we can control the finiteness of $\Sha^{d+1}_S(K,\tilde{C})$ and further establish a map $\Sha_\omega^{d+1}(K,\tilde{C})^D \rightarrow \Sha^1(K,C)$.
More precisely, we have

\begin{thm}[Global duality]
 There is a perfect pairing of finite groups
	$$\Sha^1(K, C) \times \Sha^{d+1}(K, \tilde{C}) \to \mathbb{Q} / \mathbb{Z}.$$
\end{thm}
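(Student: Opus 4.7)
The plan is to reduce the statement to the arithmetic duality for individual tori over $K$, proved by Harari--Scheiderer--Szamuely \cite{HSS15} and Izquierdo \cite{Izq16these}, via a devissage along the short complex $C = [T_1 \to T_2]$.

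First I would construct the pairing. The natural evaluation $C \dtp \tilde{C} \to \BBG_m$ combined with cup product produces, at each place $v$ of $K$, a local pairing $H^1(K_v, C) \otimes H^{d+1}(K_v, \tilde{C}) \to H^{d+2}(K_v, \BBG_m) \cong \BBQ/\BBZ$ (each completion $K_v$ being a $(d+1)$-local field with the usual invariant map). Summing the local invariants defines a global pairing; its vanishing on $\Sha^1(K,C) \times \Sha^{d+1}(K,\tilde{C})$ rests on the reciprocity law for the curve $X$ over $k$, giving the pairing in the statement.

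Next I would run the devissage. The distinguished triangle $T_1 \to T_2 \to C \to T_1[1]$ and its dual for $\tilde{C}$ give long exact sequences comparing the Galois cohomology of $T_i$ and $\tilde{T}_i$ with the hypercohomology of $C$ and $\tilde{C}$, both globally and at each completion. Combined with the local--global ladder, these yield long exact sequences among Shafarevich groups. Feeding in the torus-level pairings $\Sha^1(K, T_i) \cong \Sha^{d+1}(K, \tilde{T}_i)^D$ and applying the five-lemma yields the desired perfect pairing for $C$, with finiteness of both sides inherited from the tori.

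The role of \Cref{hypo:Sha2Vanish} is twofold: to establish finiteness of $\Sha^{d+1}(K, \tilde{C})$ (as foreshadowed in the discussion preceding the statement) and to control the extraneous $\Sha^2(K, T_i)$, and dually $\Sha^d(K, \tilde{T}_i)$, terms that appear in the long exact sequences. After restricting to the splitting field $L$, a Shapiro-type restriction--corestriction argument bounds these by $\Sha^2(L, \BBG_m) = 0$, which suffices to make the five-lemma go through. The principal obstacle is the compatibility of the pairings across the devissage, i.e.\ verifying that the long exact sequences for $C$ and $\tilde{C}$ are genuinely dual to each other under the torus-level pairings; this amounts to a careful sign check for cup products on hypercohomology. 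A secondary technical point is translating the pointwise vanishing $\Sha^2(L, \BBG_m) = 0$ into the desired vanishing for $\Sha^2$ of the $K$-tori $T_i$ themselves, which requires a restriction--corestriction argument in order to invoke the hypothesis on the splitting field.
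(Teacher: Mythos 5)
Your proposal takes a genuinely different (and longer) route than the paper, and also has a substantive gap. The paper's proof is two sentences: it invokes Izquierdo's Corollaire 4.18 of \cite{Izq16these}, Chapitre~1, which already establishes a perfect pairing
\[
\Sha^1(K, C) \times \overline{\Sha^{d+1}(K, \tilde{C})} \to \BBQ/\BBZ
\]
\emph{directly for the two-term complex $C$} (under the hypothesis that $\mathbf{X}_*(\Tsc) \to \mathbf{X}_*(T)$ is injective, which the paper notes), where the bar denotes the quotient by the maximal divisible subgroup. The only new content is then to observe that $\Sha^{d+1}(K,\tilde{C}) \subseteq \Sha^{d+1}_S(K,\tilde{C})$, which is finite by Lemma~\ref{ShaFinite} (proved earlier using Hypothesis~\ref{hypo:Sha2Vanish} via a restriction--corestriction argument to the splitting field $L$), so its maximal divisible subgroup is trivial and the bar disappears. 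There is no pairing to construct and no devissage to run; the cited result already lives at the level of complexes.

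Your devissage plan, by contrast, would re-derive the complex-level duality from the torus-level dualities, and this is where the gap appears. The long exact sequences coming from the triangles $\Tsc \to T \to C \to \Tsc[1]$ and $\tilde{T} \to \tilde{\Tsc} \to \tilde{C} \to \tilde{T}[1]$ put $\Sha^1(C)$ between $\Sha^1(T)$, $\Sha^2(\Tsc)$ on one side and $\Sha^{d+1}(\tilde{C})$ between $\Sha^{d+1}(\tilde{\Tsc})$, $\Sha^{d+2}(\tilde{T})$ on the other. For a five-lemma argument you would need compatible dualities pairing $\Sha^2(\Tsc)$ with $\Sha^{d}(\tilde{\Tsc})^D$ (and likewise at the $\Sha^0$/$\Sha^{d+2}$ positions), and it is not at all clear these hold or even that the relevant groups appear in the correct positions of your proposed ladder. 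You gesture at ``controlling'' the $\Sha^2(T_i)$ and $\Sha^d(\tilde{T}_i)$ terms via restriction--corestriction, but Hypothesis~\ref{hypo:Sha2Vanish} only gives control of $\Sha^2(L,\BBG_m)$, not of the $\Sha^d$ groups of the dual objects; there is no stated vanishing you can appeal to there. In effect you would be re-proving Izquierdo's Corollaire 4.18, which is precisely the technical heart that the paper deliberately imports as a black box. Your identification of the role of Hypothesis~\ref{hypo:Sha2Vanish} in establishing finiteness of $\Sha^{d+1}(K,\tilde{C})$ is correct and is the genuinely new step in the paper; but the rest of your plan is not a proof without supplying the missing boundary dualities.
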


Notably, we replace $\overline{\Sha^1(K,C)}$ in \cite{Izq16these}*{Chapitre~1, Corollarie~4.18} by $\Sha^1(K,C)$. 
Together with the local duality, we manage to construct a sequence of groups
$$
      1\rightarrow {\overline{G(K)}}_S \rightarrow \prod_{v\in S}G(K_v) \rightarrow \Sha_S^{d+1}(K,\tilde{C})^D \rightarrow \Sha^1(K,C)\rightarrow 0.
$$
In proving the exactness of this sequence,  
the main difficulty lies in the exactness at $\Sha_S^{d+1}(K,\tilde{C})^D$. 
To overcome this, we introduce the following hypothesis (which also automatically holds for semi-simple simply connected groups over number fields by \cite{PR94}*{Theorem~7.8}).

\begin{hypo}\label{hypo:GscWA}
    Let $\Gsc$ be the simply connected cover of the derived subgroup of $G$.
    We make an assumption that $G^{sc}$ satisfies the weak approximation property and has a quasi-trivial maximal torus.
\end{hypo}

Now we state the main theorem of this article.

\begin{thm}\label{MainThm}
Let $G$ be a connected reductive group over $K$ satisfying 
\textup{Hypothesis \ref{hypo:Sha2Vanish} and \ref{hypo:GscWA}}. Let $X^{(1)}$ denotes the set of all closed points of $X$. 
\begin{enumerate} 
    \item Let $S\subset X^{(1)}$ be a finite set of 
    places. There is an exact sequence
        \begin{equation} \label{Seq:FiniteMainTheorem}
      1 \rightarrow \overline{G(K)}_S \rightarrow \prod_{v\in S}G(K_v) \rightarrow \Sha_S^{d+1}(K,\tilde{C})^D \rightarrow \Sha^1(K,C) \rightarrow 0.  
    \end{equation}
    Here $\overline{G(K)}_S$ denotes the closure of 
    the diagonal image of $G(K)$ in $\prod_{v\in S}
    G(K_v)$ for the product topology.
    \item There is an exact sequence
         \begin{equation*}
      1 \rightarrow \overline{G(K)} \rightarrow \prod_{v\in X^{(1)}}G(K_v) \rightarrow \Sha_{\omega}^{d+1}(K,\tilde{C})^D \rightarrow \Sha^1(K,C) \rightarrow 0.  
    \end{equation*}
    Here $\overline{G(K)}$ denotes the closure 
    of the diagonal image of $G(K)$ in $\prod_{v\in X^{(1)}}
    G(K_v)$ for the product topology.
\end{enumerate}

\end{thm}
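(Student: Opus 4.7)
The plan is to reduce the statement for $G$ to its analog for the two-term complex of tori $C \simeq [\Tsc \to T]$ via Borovoi's abelianization, then establish the complex-of-tori version by combining the Global Duality theorem stated above with local duality and a Poitou--Tate twelve-term diagram chase. Concretely, I would first establish the counterpart of (\ref{Seq:FiniteMainTheorem}) with $G(K_v)$ replaced by the hypercohomology group $\bb H^0(K_v, C)$:
\[
1 \to \ol{\bb H^0(K, C)}_S \to \prod_{v \in S} \bb H^0(K_v, C) \to \Sha_S^{d+1}(K, \tilde C)^D \to \Sha^1(K, C) \to 0,
\]
where the middle arrow sends $(c_v)$ to $\eta \mapsto \sum_{v \in S} \langle c_v, \eta_v \rangle$ via local duality (well-defined since $\eta \in \Sha_S^{d+1}(K, \tilde C)$ vanishes outside $S$), and the last arrow dualizes the inclusion $\Sha^{d+1}(K, \tilde C) \hookrightarrow \Sha_S^{d+1}(K, \tilde C)$ followed by the Global Duality isomorphism. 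The last arrow is thereby surjective, and middle exactness of this $C$-version is a standard Poitou--Tate diagram chase relying on local and global dualities for $C$ and the finiteness inputs supplied by Hypothesis~\ref{hypo:Sha2Vanish}.

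To transport this to $G$, I would invoke Borovoi's abelianization $\mathrm{ab}_v \colon G(K_v) \to \bb H^0(K_v, C)$, whose image is controlled by $H^1(K_v, \Gsc)$. The quasi-trivial maximal torus part of Hypothesis~\ref{hypo:GscWA} is used to ensure local surjectivity of $\mathrm{ab}_v$ in our $d$-local setting. Granted this, two of the four exactness statements follow from the $C$-version by functoriality: surjectivity at $\Sha^1(K, C)$, and exactness at $\Sha_S^{d+1}(K, \tilde C)^D$. Moreover $\ol{G(K)}_S$ sits inside the kernel at $\prod_{v\in S} G(K_v)$ by continuity of the local pairings together with Poitou--Tate reciprocity applied to the abelianizations.

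The main obstacle is the reverse containment $\ker \subset \ol{G(K)}_S$ at $\prod_{v\in S} G(K_v)$: given $(g_v)$ in the kernel, the abelianizations $(\mathrm{ab}_v g_v)$ lie in $\ol{\bb H^0(K, C)}_S$ by the $C$-version, so after multiplying $(g_v)$ by a suitable global element of $G(K)$ we may assume $(\mathrm{ab}_v g_v)$ is arbitrarily close to zero. Then each $g_v$ lies within a small neighborhood of $\Gsc(K_v)$, and the weak approximation hypothesis on $\Gsc$ produces a global $\gamma \in \Gsc(K) \subset G(K)$ that approximates $(g_v)$ to arbitrary precision. This is the step for which both halves of Hypothesis~\ref{hypo:GscWA} are essential, and it is the crux of the argument.

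Part (2) follows by taking the inverse limit of (\ref{Seq:FiniteMainTheorem}) over finite $S \subset X^{(1)}$ ordered by inclusion: from $\Sha_\omega^{d+1}(K, \tilde C) = \varinjlim_S \Sha_S^{d+1}(K, \tilde C)$ one gets $\Sha_\omega^{d+1}(K, \tilde C)^D = \varprojlim_S \Sha_S^{d+1}(K, \tilde C)^D$, and $\varprojlim_S \prod_{v \in S} G(K_v) = \prod_{v \in X^{(1)}} G(K_v)$. A standard Mittag--Leffler argument preserves exactness of the limit, while $\ol{G(K)} = \varprojlim_S \ol{G(K)}_S$ is immediate from the definition of the product topology.
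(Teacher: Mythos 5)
Your overall architecture — prove a Poitou--Tate-type exact sequence for the two-term complex $C$ and then transport it to $G$ via abelianization — is a reasonable-looking plan and shares some ingredients with the paper (local and global dualities, surjectivity of $\mathrm{ab}^0_v$, functoriality of the pairings). But it diverges from the paper's actual strategy, and the divergence is exactly where you leave gaps. The paper does not first establish a $C$-version and then lift: it invokes Sansuc's Lemme~1.10 to reduce to the case where $G$ admits a special covering $1\to F_0\to G_0\to G\to 1$ with $G_0=G^{\mathrm{sc}}\times Q$ (this is Lemma~\ref{Lem1}), and then runs the diagram chase for Lemma~\ref{FirstExact} against the finite group $H^1(K,F_0)$ rather than against $\mathbb{H}^0(K,C)$. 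The exactness at $\Sha_S^{d+1}(K,\tilde C)^D$ and the surjectivity onto $\Sha^1(K,C)$ are handled separately and more directly, using the local surjectivity of $\mathrm{ab}^0$ together with the dualization of $0\to\Sha^{d+1}(K,\tilde C)\to\Sha_S^{d+1}(K,\tilde C)\to\prod_{v\in S}\mathbb{H}^{d+1}(K_v,\tilde C)$.

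The concrete gap in your crux step is the phrase ``after multiplying $(g_v)$ by a suitable global element of $G(K)$ we may assume $(\mathrm{ab}_v g_v)$ is arbitrarily close to zero.'' The $C$-version gives you a global class $c_0\in\mathbb{H}^0(K,C)$ close to $(\mathrm{ab}_v g_v)_v$, but the abelianization map $\mathrm{ab}^0\colon G(K)\to\mathbb{H}^0(K,C)$ is \emph{not} surjective over $K$: its cokernel injects into $H^1(K,G^{\mathrm{sc}})$, which has no reason to vanish over our function field. So there is no guarantee that $c_0$ (or any nearby class) lifts to $G(K)$, and the reduction you want does not follow. The paper sidesteps this precisely by working with $H^1(K,F_0)$ and showing, using the quasi-trivial maximal torus $T_0\supseteq F_0$ of $G_0$ (so $H^1(K,T_0)=0$), that the connecting map $\partial\colon G(K)\to H^1(K,F_0)$ is surjective. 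Note in passing that this is also the actual role of the quasi-trivial maximal torus hypothesis: it is not used to secure local surjectivity of $\mathrm{ab}^0_v$ (for that the paper simply cites Tian's Lemma~3.6) but to make $\partial$ surjective in the diagram chase.

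For part (2), ``a standard Mittag--Leffler argument'' is not enough. Taking $\varprojlim_S$ of the finite sequences is only automatically exact at the first two spots; for exactness at $\Sha_\omega^{d+1}(K,\tilde C)^D$ one must show that $\mathrm{Im}\bigl(\prod_{v}G(K_v)\to\Sha_\omega^{d+1}(K,\tilde C)^D\bigr)$ actually equals $\varprojlim_S A_S$, where $A_S$ denotes the image at finite level, and this would require the system $\bigl(\overline{G(K)}_S\bigr)_S$ to satisfy Mittag--Leffler, which you have not verified and which is not obvious. The paper instead dualizes $1\to\Sha^{d+1}(K,\tilde C)\to\Sha_\omega^{d+1}(K,\tilde C)\to\bigoplus_v\mathbb{H}^{d+1}(K_v,\tilde C)$ and then proves that the image of $\prod_v G(K_v)$ in $\Sha_\omega^{d+1}(K,\tilde C)^D$ is \emph{closed}, by showing that $\prod_v G(K_v)/\overline{G(K)}$ is compact (it is a quotient of the profinite group $\prod_v H^1(K_v,F_0)$, again via the special covering) and that the target is Hausdorff. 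You would need to replace the Mittag--Leffler appeal with an argument of this type, or else establish the ML condition for $\bigl(\overline{G(K)}_S\bigr)_S$, which is not supplied.

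In summary: your plan is structurally plausible but, at the two decisive points (the lifting to $G(K)$ in the kernel containment, and the passage to the limit), it relies on assertions that do not hold in the stated generality. Both gaps are bridged in the paper by reducing to a special covering $G_0\to G$ and exploiting the finiteness of $F_0$, the quasi-triviality of $T_0$, and the compactness of the resulting quotient.
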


Under \Cref{hypo:GscWA},
we may assume that $G$ admits a special covering (see (\ref{para: special covering}) below) 
which enables us to prove the exactness through diagram chasing.  Finally, we conclude the proof by taking projective limits.
Besides, in the proof, we will see that $\overline{G(K)}$ is a normal subgroup of $\prod_{v\in X^{(1)}}G(K_v)$.

\textbf{Acknowledgments.} This work was done under the supervision of Yisheng Tian. We would like to 
express our deepest gratitude to him for his invaluable guidance throughout the entire process of this research. 
We also thank Academy of Mathematics and Systems 
Science, CAS for providing an excellent environment. 
We appreciate the Algebra and Number Theory Summer School for its organization and financial support.

\section{Notation and Conventions}
    \begin{para}[$d$-local fields]\label{dlocal}
        A $0$-local field is a finite field. 
        For an integer $d>0$, a $d$-local field is a complete discrete valuation field whose residue field is a $(d-1)$-local field.
        Thus $1$-local fields are just local fields in the usual sense.
    \end{para}
    
    \begin{para}[Function fields]
    Throughout, let $X$ be a smooth projective geometrically integral curve over some $d$-local field $k$ of characteristic 0.
    Let $K=k(X)$ be the function field of $X$.
    For any closed point $v\in X\uun$, the local ring 
    $\mathscr{O}_{X,v}$ is a $1$-dimensional regular local ring, hence it is a discrete valuation ring.
    Thus we write $K_v$ for the completion of $K$ \wrt $v$
    which is a $(d+1)$-local field.
    Let $\ol{K}$ be a fixed algebraic closure of $K$ and 
    let $\Gamma\ce \gal(\ol{K}|K)$ be the absolute Galois group of $K$.
    \end{para}
    
    \begin{para}[Algebraic groups] \label{AlgebraicGroups}
    
    Given a linear algebraic group $G$ over $K$, there exists a largest connected smooth normal solvable (resp. unipotent) subgroup, denoted by $R(G)$ (resp. $R_u(G)$). If $R(G_{\overline{K}})$ (resp. $R_u(G_{\overline{K}})$) is trivial, then $G$ is called semisimple (resp. reductive).

        Let $G$ be a connected reductive group over $K$. Let  $G^{\mathrm{ss}}$ be its derived subgroup, which is semisimple. 
        Let $G^{\mathrm{sc}}$ be the simply connected cover of $G^{\mathrm{ss}}$.
        So there is a central isogeny $\Gsc\to \Gss$.
        Throughout, let $\rho:\Gsc\to \Gss\to G$ be the composite 
        (see \cite{Mil17}*{Theorem~18.25} for details). 
        For example, if $G=\GL_n$, then we have $\Gss=\Gsc=\SL_n$.
    \end{para}
    
    \begin{para}[Tori]
        An algebraic group $G$ over $K$ is a torus if 
        $G_L\cong \BBG^{\oplus d}_{m,L}$ for some finite 
        separable extension $L/K$.
        A $K$-torus $T$ is quasi-trivial if 
        $T\cong \prod\Res_{L_i/K} \BBG_{m}$ for some 
        finite separable extensions $L_i/K$.
        For a $K$-torus $T$, let 
        $\mathbf{X}^{*}(T)$ (resp. $\mathbf{X}_*(T)$) be its module of characters (resp. cocharacters). 
        
        Now suppose that $G$ is a connected reductive group. 
        For a maximal torus $T$ of $G$,
        the inverse image $\Tsc \coloneqq \rho^{-1}(T)$
        is a maximal torus of $\Gsc$. See \cite{SGA3II}*{Exposé~XI, Théoème~7.1(f), Exposé~XIV, Proposition~4.9} and \cite{SGA3III}*{Exposé~XXIl, Proposition~6.2.8}.
    \end{para}

\begin{para}
[Special covering]\label{para: special covering}
An isogeny $G_0 \to G$ of connected reductive groups is called a special covering if $G_0$ is the product of a semi-simple simply connected group and a quasi-trivial torus. 
\end{para}

\begin{para}
[Motivic complexes]
By a $K$-variety $Y$, we always mean a separated $K$-scheme of finite type.
Bloch defined the cycle complex $z^i(Y,\bullet)$ in \cite{Blo86}. 
When $Y$ is smooth, we denote the \'etale motivic complex over $Y$
(which is a complex of \'etale sheaves) 
by $\mathbb{Z}(i)=z^i(-,\bullet)[-2i]$ on the small \'etale site of $Y$. As an example, we have quasi-isomorphisms 
$\mathbb{Z}(0)\cong\mathbb{Z}$ and $\mathbb{Z}(1)[1]\cong\mathbb{G}_m$ by \cite{Blo86}*{Corollary~6.4}. 
\end{para}    

\begin{para}
[Short complex] \label{ShortComplex}
Let $G$ be a connected reductive group and choose a maximal torus $T$ of $G$. Denote $T^\mathrm{sc}$ the corresponding maximal torus of $T$ in $G^\mathrm{sc}$ and $\rho_T: T^\mathrm{sc}\rightarrow T$ the morphism induced by $\rho\colon G^\mathrm{sc}\rightarrow G$. Define $\tilde{T}=\mathbf{X}^*(T)\otimes^\mathbf{L}\mathbb{Z}(d)$ and $\tilde{T}^\mathrm{sc}=\mathbf{X}^*(\Tsc)\otimes^\mathbf{L}\mathbb{Z}(d)$ and $\tilde{\rho_T}: \tilde{T}\rightarrow\tilde{T}^\mathrm{sc}$ the morphism induced by $\rho_T$.  Denote $C=\cone[\rho_T]=[T^\mathrm{sc}\rightarrow T]$ and $\tilde{C}=\cone[\tilde{\rho}_T]=[\tilde{T}\rightarrow\tilde{T}^\mathrm{sc}]$, which are concentrated in degree $-1$ and $0$. 
\end{para}

\begin{para}
[Tate--Shafarevich groups] 
For a subset $S\subseteq X^{(1)}$, let $\Sha^{i}_S(K,C)$ be the Tate--Shafarevich group associated to the Galois hypercohomology of $C$: 
\[
\Sha^{i}_S(K,C)=\Ker\left(\mathbb{H}^i(K,C)\rightarrow\prod_{v\notin S}\mathbb{H}^i(K_v,C)\right).
\]
We put further $\Sha^i(K,C)=\Sha^i_\varnothing(K,C)$
and $\Sha^i_{\omega}(K,C)=\bigcup_S \Sha^i_S(K,C)$ where 
$S$ runs through all finite subsets $S\subset X^{(1)}$.
\end{para}

\begin{para}
[Weak approximation]
Let $L$ be  a topological field (for example, a local field). 
We equip $\mathbb{A}^n(L)=L^n$ with the product topology. 
If $Y$ is a closed subvariety of $\mathbb{A}^n$, 
we equip $Y(L)\subset\mathbb{A}^n(L)$ with the subspace topology. 
In general, 
an $L$-variety $Y$ is obtained by gluing affine open subsets, 
so we use the same gluing data to glue the topological spaces. 
In particular, we give $\prod_{v\in X^{(1)}}Y(K_v)$ the product topology.
We say that $Y$ satisfies weak approximation (WA) if 
 \;$\overline{Y(K)}=\prod_{v\in X^{(1)}}Y(K_v)$. 
\end{para}

\section{Arithmetic dualities}
To construct the arrows in the sequence 
(\ref{Seq:FiniteMainTheorem}), we need to introduce 
arithmetic dualities. We begin with local dualities 
which enables us to link $\prod_{v\in S}G(K_v)$ with 
$\Sha_S^{d+1}(K,\tilde{C})$.
The next proposition will be used to establish the duality map and deduce certain finiteness of Tate--Shafarevich groups.

\begin{prop}\label{prop:MotivicComplex}
    For any field $L$ with character $0$, we have $\mathbb{H}^{d+1}(L,\mathbb{Z}(d))=0$ and $\mathbb{H}^{d+2}(L,\mathbb{Q}/\mathbb{Z}(d+1))\cong\mathbb{H}^{d+3}(L,\mathbb{Z}(d+1))$.
\end{prop}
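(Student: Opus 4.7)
My plan is to deduce both assertions from the long exact sequences attached to the distinguished triangles $\mathbb{Z}(n) \to \mathbb{Q}(n) \to \mathbb{Q}/\mathbb{Z}(n)$ for $n = d$ and $n = d+1$, after first establishing a preliminary vanishing for the rational motivic cohomology of a field.

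The preliminary step is: for any field $L$ of characteristic $0$ and any $n \geq 0$, one has $\mathbb{H}^i(L, \mathbb{Q}(n)) = 0$ whenever $i > n$. Since rationally the étale and Zariski motivic cohomologies coincide, it suffices to argue in the Zariski topology, where Bloch's cycle complex gives $\mathbb{H}^i_{\Zar}(L, \mathbb{Z}(n)) = \CHOW^n(\Spec L, 2n - i)$. Because $\Spec L$ is zero-dimensional, the simplex $\Delta^q_L$ has dimension $q$ and admits no codimension-$n$ cycles when $q < n$; hence $\CHOW^n(L, q) = 0$ for $q < n$, forcing $\mathbb{H}^i_{\Zar}(L, \mathbb{Z}(n)) = 0$ for $i > n$, and the same after tensoring with $\mathbb{Q}$.

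For the first statement, the relevant fragment of the long exact sequence in degree $d+1$ reads
$$
\mathbb{H}^d(L, \mathbb{Q}(d)) \xrightarrow{\beta} \mathbb{H}^d(L, \mathbb{Q}/\mathbb{Z}(d)) \to \mathbb{H}^{d+1}(L, \mathbb{Z}(d)) \to \mathbb{H}^{d+1}(L, \mathbb{Q}(d)) = 0,
$$
where the last term vanishes by the preliminary step. I would then identify both ends of $\beta$ using the Nesterenko--Suslin--Totaro isomorphism $\mathbb{H}^d(L, \mathbb{Z}(d)) \cong K_d^M(L)$ together with Voevodsky's norm residue theorem (Bloch--Kato) $\mathbb{H}^d(L, \mathbb{Z}/m(d)) \cong K_d^M(L)/m$, obtained from the identification $\mathbb{Z}/m(d) \cong \mu_m^{\otimes d}$ in characteristic $0$. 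Passing to the colimit over $m$, $\beta$ becomes the canonical surjection $K_d^M(L) \otimes \mathbb{Q} \twoheadrightarrow K_d^M(L) \otimes \mathbb{Q}/\mathbb{Z}$, whose cokernel is zero, giving the first vanishing.

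The second statement is then immediate: applying the LES to the triangle for $n = d+1$ produces
$$
\mathbb{H}^{d+2}(L, \mathbb{Q}(d+1)) \to \mathbb{H}^{d+2}(L, \mathbb{Q}/\mathbb{Z}(d+1)) \to \mathbb{H}^{d+3}(L, \mathbb{Z}(d+1)) \to \mathbb{H}^{d+3}(L, \mathbb{Q}(d+1)),
$$
and both flanking $\mathbb{Q}$-coefficient terms vanish by the preliminary step since $d+2, d+3 > d+1$. The main technical subtlety I anticipate is verifying that the map $\beta$ in Step two really coincides with the canonical surjection on Milnor K-theory under the above identifications; this should reduce to the naturality of the norm residue symbol with respect to the change-of-coefficients map $\mathbb{Q}(d) \to \mathbb{Q}/\mathbb{Z}(d)$.
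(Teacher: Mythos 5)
Your proposal is correct, and it takes a genuinely different (and more self-contained) route than the paper, which simply cites Izquierdo's thesis (Chapitre~1, Th\'eor\`eme~0.4) for the needed vanishing and comparison statements.

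Your preliminary vanishing is sound: for a field, Bloch's cycle groups $z^n(L,q)$ vanish for $q<n$ for dimension reasons, so the complex $\mathbb{Z}(n)$ of Zariski sheaves on $\Spec L$ is concentrated in degrees $\le n$; together with the Zariski cohomological dimension being $0$ and the rational agreement of Zariski and \'etale motivic cohomology, this gives $\mathbb{H}^i_{\et}(L,\mathbb{Q}(n))=0$ for $i>n$. The second assertion then falls out of the long exact sequence for $n=d+1$ exactly as you say, which is essentially the same mechanism as in the paper. For the first assertion, your argument via the Bockstein $\beta$ and the identifications $\mathbb{H}^d(L,\mathbb{Q}(d))\cong K^M_d(L)\otimes\mathbb{Q}$ (Nesterenko--Suslin--Totaro plus rational agreement) and $\mathbb{H}^d(L,\mathbb{Q}/\mathbb{Z}(d))\cong K^M_d(L)\otimes\mathbb{Q}/\mathbb{Z}$ (norm residue theorem, passing to the colimit over $m$) does work, and you are right that the only real task is to check that $\beta$ matches the canonical surjection under these identifications; this is the Geisser--Levine statement that the norm residue map agrees with the change-of-topology map on motivic cohomology with finite coefficients. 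However, this route is a detour: once one has the integral Beilinson--Lichtenbaum theorem (itself a consequence of the norm residue theorem via Geisser--Levine), the map $\mathbb{H}^i_{\Zar}(L,\mathbb{Z}(n))\to\mathbb{H}^i_{\et}(L,\mathbb{Z}(n))$ is an isomorphism for all $i\le n+1$, and the vanishing $\mathbb{H}^{d+1}_{\et}(L,\mathbb{Z}(d))=0$ follows immediately from $\mathbb{H}^{d+1}_{\Zar}(L,\mathbb{Z}(d))=0$, with no need to analyze $\beta$ at all. So your proof is correct and gives a reader useful insight into the $K$-theoretic content, but it can be streamlined by quoting Beilinson--Lichtenbaum directly, which is closer in spirit to the citation-based proof the paper gives.
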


\begin{proof}
    The first statement follows from \cite{Izq16these}*{Chapitre~1, Th\'eor\`eme~0.4(ii)(vii)}. Since 
    $$0\to \mathbb{Z}(i)\to \mathbb{Q}(i)\to\mathbb{Q}/\mathbb{Z}(i)\to 0$$
    is exact for any non-negative integer $i$, the second statement follows from \cite{Izq16these}*{Chapitre~1, Th\'eor\`eme~0.4(iii)}.
\end{proof}

\begin{thm} [Local Duality]\label{LocalDuality}
     There are perfect pairings of abelian groups:
    $$\mathbb{H}^0(K_v, C)^{\wedge} \times \mathbb{H}^{d+1}(K_v, \tilde{C}) \to \mathbb{Q} / \mathbb{Z}$$
\end{thm}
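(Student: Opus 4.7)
The plan is to reduce the statement to the local Tate-style duality for individual tori over the $(d+1)$-local field $K_v$ (known by Izquierdo's thesis) and then combine these dualities using a five-lemma on the hypercohomology long exact sequences coming from the distinguished triangles
\[
\Tsc \to T \to C \to \Tsc[1], \qquad \tilde{T} \to \tilde{T}^{\mathrm{sc}} \to \tilde{C} \to \tilde{T}[1].
\]

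\textbf{Step 1: Constructing the pairing.} The evaluation $\mathbf{X}^{*}(T)\otimes T \to \BBG_m \cong \mathbb{Z}(1)[1]$, after tensoring with $\mathbb{Z}(d)$, yields a morphism $T \otimes^{\mathbf{L}} \tilde{T} \to \mathbb{Z}(d+1)[1]$, and similarly $\Tsc \otimes^{\mathbf{L}} \tilde{T}^{\mathrm{sc}} \to \mathbb{Z}(d+1)[1]$. Compatibility with $\rho_T$ and $\tilde{\rho}_T$ combines these into a single morphism $C \otimes^{\mathbf{L}} \tilde{C} \to \mathbb{Z}(d+1)[1]$. Taking cup product and composing with the invariant $\mathbb{H}^{d+2}(K_v, \mathbb{Z}(d+1)) \hookrightarrow \mathbb{H}^{d+2}(K_v, \mathbb{Q}/\mathbb{Z}(d+1)) \cong \mathbb{Q}/\mathbb{Z}$ coming from higher local class field theory (and \Cref{prop:MotivicComplex}, which identifies the motivic groups in the relevant degrees) produces a pairing $\mathbb{H}^{0}(K_v, C) \times \mathbb{H}^{d+1}(K_v, \tilde{C}) \to \mathbb{Q}/\mathbb{Z}$. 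Since the target is profinite, the pairing factors through $\mathbb{H}^{0}(K_v, C)^{\wedge}$.

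\textbf{Step 2: Ladder and five-lemma.} Local duality for tori over $K_v$ (\cite{Izq16these}*{Chapitre~1}) provides, for every $K_v$-torus $T'$, a perfect pairing
\[
\mathbb{H}^{i}(K_v, T')^{\wedge} \times \mathbb{H}^{d+2-i}(K_v, \tilde{T}') \to \mathbb{Q}/\mathbb{Z}.
\]
Applying this with $T'=T, \Tsc$ and $i=0,1$, one aligns the (completed) hypercohomology long exact sequence for $\Tsc \to T \to C$ in degrees $0,1$ with the Pontryagin dual of the hypercohomology long exact sequence for $\tilde{T} \to \tilde{T}^{\mathrm{sc}} \to \tilde{C}$ in degrees $d+1,d+2$. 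The vertical maps of the resulting ladder are given by the pairings of Step~1 and commute by the naturality of cup product; four of its five vertical arrows are isomorphisms by the duality for tori. The five-lemma then forces the middle arrow $\mathbb{H}^{0}(K_v, C)^{\wedge} \to \mathbb{H}^{d+1}(K_v, \tilde{C})^{D}$ (where $(-)^D = \Hom(-, \mathbb{Q}/\mathbb{Z})$) to be an isomorphism, yielding the asserted perfect pairing.

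\textbf{Main obstacle.} The principal technical issue is verifying that the top row stays exact after the profinite completion $(-)^{\wedge}$, an operation that is not exact on arbitrary abelian groups. In our setting the two rightmost terms $H^{1}(K_v, \Tsc)$ and $H^{1}(K_v, T)$ are already finite (by finiteness of $H^{1}$ of tori over a higher local field) and hence unchanged; the two leftmost terms $\Tsc(K_v)$ and $T(K_v)$ carry their standard locally compact topologies; and $\mathbb{H}^{0}(K_v, C)$ inherits an extension topology from $T(K_v)/\mathrm{Im}(\Tsc(K_v))$ by a discrete finite group. The crux is to check that completion commutes with the connecting homomorphisms in this five-term sequence, using that the quotient map $T(K_v) \to \mathbb{H}^{0}(K_v, C)$ has open image. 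Once this is established, the Pontryagin dual of the bottom row is automatically exact (as $\mathbb{Q}/\mathbb{Z}$ is injective), and the five-lemma goes through routinely.
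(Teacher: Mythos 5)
The paper disposes of this theorem in two lines: it defines the pairing via cup product and then simply cites \cite{Izq16these}*{Chapitre~1, Proposition~4.9} for the perfectness. Your Step~2 --- lining up the completed long exact sequence for $\Tsc \to T \to C$ against the Pontryagin dual of the long exact sequence for $\tilde{T} \to \tilde{T}^{\mathrm{sc}} \to \tilde{C}$, invoking local duality for the tori $T$ and $\Tsc$ over the $(d+1)$-local field $K_v$, and applying the five-lemma --- is the standard d\'evissage that the cited reference carries out, and your identification of the main technical issue (exactness after profinite completion) is on target. So the overall plan matches the underlying argument.

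However, Step~1 contains a degree error that, as written, makes the pairing identically zero. You claim a morphism $C \otimes^{\mathbf{L}} \tilde{C} \to \mathbb{Z}(d+1)[1]$ and hence a cup product landing in $\mathbb{H}^{d+2}(K_v, \mathbb{Z}(d+1))$. But $C = [\Tsc \to T]$ and $\tilde{C} = [\tilde{T} \to \tilde{T}^{\mathrm{sc}}]$ are both two-term complexes in degrees $[-1,0]$; the compatible pairings $T \otimes^{\mathbf{L}} \tilde{T} \to \mathbb{Z}(d+1)[1]$ and $\Tsc \otimes^{\mathbf{L}} \tilde{T}^{\mathrm{sc}} \to \mathbb{Z}(d+1)[1]$ glue to a pairing on the cones with an extra shift, namely
\[
C \otimes^{\mathbf{L}} \tilde{C} \longrightarrow \mathbb{Z}(d+1)[2],
\]
so the cup product maps $\mathbb{H}^0(K_v,C) \times \mathbb{H}^{d+1}(K_v,\tilde{C})$ into $\mathbb{H}^{d+1}(K_v,\mathbb{Z}(d+1)[2]) = \mathbb{H}^{d+3}(K_v,\mathbb{Z}(d+1))$, exactly as the paper records. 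Your target $\mathbb{H}^{d+2}(K_v,\mathbb{Z}(d+1))$ \emph{vanishes} --- this is the weight-$(d+1)$ instance of the same Nesterenko--Suslin--Totaro vanishing used in Proposition~\ref{prop:MotivicComplex}(i) --- so the pairing you construct factors through $0$. The identification you want is therefore not an injection out of $\mathbb{H}^{d+2}(K_v,\mathbb{Z}(d+1))$ but the isomorphism $\mathbb{H}^{d+3}(K_v,\mathbb{Z}(d+1)) \cong \mathbb{H}^{d+2}(K_v,\mathbb{Q}/\mathbb{Z}(d+1)) \cong \mathbb{Q}/\mathbb{Z}$ supplied by Proposition~\ref{prop:MotivicComplex}(ii) and higher local class field theory for the $(d+1)$-local field $K_v$. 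Once the shift is corrected, your Step~2 can proceed as stated.
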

\begin{proof}
    The pairing is given by the composition of cup product and \cite{Izq16these}*{Chapitre~1, Lemme~4.3}: 
    \[
        \mathbb{H}^0(K_v, C)^{\wedge} \times \mathbb{H}^{d+1}(K_v, \tilde{C}) \to \mathbb{H}^{d+1}(K_v,\mathbb{Z}(d+1)[2])\cong\mathbb{H}^{d+2}(K_v,\mathbb{Q}/\mathbb{Z}(d+1))\cong\mathbb{Q}/\mathbb{Z}. 
    \]
    See \cite{Izq16these}*{Chapitre~1, Proposition~4.9} for a proof of the perfectness.
\end{proof}

The rest of this section is devoted to global dualities. We state the following lemmas as prerequisites. 

\begin{lem} \label{Nullities}
If $R$ is a quasi-trivial torus over $K$ and $\tilde{R}=\operatorname{Hom}(R,\mathbb{G}_m)\otimes^\mathbf{L}\mathbb{Z}(d)$, then:
\begin{enumerate}
    \item[\textup{(i)}] \label{Cohomology1VanishForQuasiTrivial} $H^1(K,R)=0$ and $H^1(K_v,R)=0$ for all $v\in X^{(1)}$. 
    \item[\textup{(ii)}] $\mathbb{H}^{d+1}(K,\tilde{R})=0$ and $\mathbb{H}^{d+1}(K_v,\tilde{R})=0$ for all $v\in X^{(1)}$. 
\end{enumerate}
\end{lem}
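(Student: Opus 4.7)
The proof splits into the two parts stated and both reduce, by functoriality and compatibility of the constructions with finite products, to the case of a single Weil restriction $R = \Res_{L/K}\mathbb{G}_m$ for a finite separable extension $L/K$.

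For part (i), I would invoke Shapiro's lemma for Galois cohomology together with Hilbert 90. Explicitly, $H^1(K, \Res_{L/K}\mathbb{G}_m) \cong H^1(L, \mathbb{G}_m) = 0$, and analogously $H^1(K_v, \Res_{L/K}\mathbb{G}_m) \cong \prod_{w \mid v} H^1(L_w, \mathbb{G}_m) = 0$, where the product runs over the finitely many places $w$ of $L$ above $v$.

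For part (ii), I would argue that the character module of $R = \Res_{L/K}\mathbb{G}_m$ is the permutation module $\mathbf{X}^*(R) \cong \mathbb{Z}[\Gamma_K/\Gamma_L]$, so that $\tilde{R} \simeq \mathbb{Z}[\Gamma_K/\Gamma_L] \otimes^{\mathbf{L}} \mathbb{Z}(d)$. This is precisely the pushforward along the finite \'etale morphism $\Spec L \to \Spec K$ of the motivic complex $\mathbb{Z}(d)$ on $\Spec L$. A Shapiro-type identification then gives
\[
\mathbb{H}^{d+1}(K, \tilde{R}) \cong \mathbb{H}^{d+1}(L, \mathbb{Z}(d)),
\]
and the right-hand side vanishes by \Cref{prop:MotivicComplex}, since $L$ has characteristic $0$. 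The local case is entirely parallel: pulling back along $\Spec K_v \to \Spec K$ and using that $L \otimes_K K_v \cong \prod_{w\mid v} L_w$ is a finite product of characteristic $0$ fields, one obtains
\[
\mathbb{H}^{d+1}(K_v, \tilde{R}) \cong \prod_{w \mid v} \mathbb{H}^{d+1}(L_w, \mathbb{Z}(d)) = 0,
\]
again by \Cref{prop:MotivicComplex}.

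The only genuinely non-routine point is the Shapiro-type identification for hypercohomology of the motivic complex tensored with a permutation module; I expect this to be the main obstacle in the write-up, but it follows from the fact that $\mathbb{Z}(d)$ is compatible with \'etale pullback and that the derived tensor with a permutation module computes the cohomology over the corresponding finite \'etale cover. Everything else is a direct application of Hilbert 90, Shapiro's lemma, and the vanishing already recorded in \Cref{prop:MotivicComplex}.
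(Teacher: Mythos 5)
Your proposal is correct and matches the paper's own (much terser) proof: the paper invokes Shapiro's lemma to reduce to $\mathbb{G}_m$ over $L$ (and $L_w$), then applies Hilbert 90 for (i) and the vanishing $\mathbb{H}^{d+1}(L,\mathbb{Z}(d))=0$ from \Cref{prop:MotivicComplex}(i) for (ii). You have simply unpacked the Shapiro step — reducing to a single Weil restriction, noting $L\otimes_K K_v\cong\prod_{w\mid v}L_w$, and justifying the Shapiro-type isomorphism for the motivic complex — which the paper leaves implicit.
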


\begin{proof}
    After Shapiro's lemma, (i)
    is a direct consequence of Hilbert's Theorem 90 and (ii) follows from \cref{prop:MotivicComplex}(i).
\end{proof}

\begin{lem} \label{ShaFinite}
The Tate--Shafarevich group $\Sha^{d+1}_S(K,\tilde{C})$ is finite.
\end{lem}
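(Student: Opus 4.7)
The plan is to reduce the finiteness of $\Sha^{d+1}_S(K, \tilde{C})$ to finiteness on the torus-level, and from there to the vanishing provided by \Cref{hypo:Sha2Vanish}. I would proceed in three stages.

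\textbf{Stage 1 (Reduction to tori).} Start from the defining distinguished triangle
$$\tilde{T} \xrightarrow{\tilde{\rho}_T} \tilde{T}^{\mathrm{sc}} \to \tilde{C} \to \tilde{T}[1].$$
Apply hypercohomology over $K$ and over each $K_v$, and chase the two long exact sequences linked by the localization maps. A standard diagram-chase then produces an exact sequence
$$\Sha^{d+1}_S(K, \tilde{T}^{\mathrm{sc}}) \longrightarrow \Sha^{d+1}_S(K, \tilde{C}) \longrightarrow \Sha^{d+2}_S(K, \tilde{T}),$$
so it suffices to bound the image on the right and the left-hand group, i.e.\ to control the Tate--Shafarevich groups of the two dual tori $\tilde{T}$ and $\tilde{T}^{\mathrm{sc}}$ in degrees $d+1$ and $d+2$.

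\textbf{Stage 2 (Flasque resolutions).} Fix a finite Galois extension $L/K$ splitting both $T$ and $T^{\mathrm{sc}}$, as in \Cref{hypo:Sha2Vanish}, and for each torus $T'\in\{T,T^{\mathrm{sc}}\}$ choose a flasque resolution $1\to F\to P\to T'\to 1$ with $P$ quasi-trivial and $F$ flasque, both split by $L$. Dualizing on characters and applying $-\otimes^{\mathbf{L}}\mathbb{Z}(d)$ yields a distinguished triangle $\tilde{T}'\to\tilde{P}\to\tilde{F}$. \Cref{Nullities}(ii) then kills $\mathbb{H}^{d+1}(K,\tilde{P})$ and every $\mathbb{H}^{d+1}(K_v,\tilde{P})$, so the associated long exact sequences (again chased against their localizations) exhibit the relevant pieces of $\Sha^{d+1}_S(K,\tilde{T}')$ and $\Sha^{d+2}_S(K,\tilde{T})$ as subquotients of Sha-groups attached to $\tilde{F}$ and $\tilde{P}$.

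\textbf{Stage 3 (Applying the hypothesis).} The computations for $\tilde{P}$ reduce, via Shapiro's lemma applied to $P=\bigoplus\Res_{L_i/K}\mathbb{G}_m$, to motivic cohomology of $\mathbb{Z}(d)$ over intermediate subfields $L_i\subseteq L$. \Cref{prop:MotivicComplex} furnishes $\mathbb{H}^{d+1}(L,\mathbb{Z}(d))=0$, while \Cref{hypo:Sha2Vanish} (via \cite{Izq16these}*{Chapitre~1, Lemme~3.20}) gives $\Sha^{d+2}(L,\mathbb{Z}(d))=0$. Combining these with a further flasque resolution of $F$ and a restriction--corestriction argument to descend the vanishing from $L$ to each $L_i$ (up to controlled torsion), one obtains finiteness of the Sha-groups of $\tilde{F}$. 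Feeding this back through Stages 1--2 yields the finiteness claim for $\Sha^{d+1}_S(K,\tilde{C})$.

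The main obstacle I anticipate is the Shapiro/corestriction step in Stage~3: \Cref{hypo:Sha2Vanish} is stated only over $L$, but the flasque resolutions naturally involve several intermediate fields $L_i\subseteq L$, and one must check that the relevant $\Sha^{d+2}(L_i,\mathbb{Z}(d))$ vanish (or are at least finite) rather than merely being killed by $[L:L_i]$. A secondary technical point is verifying that the exact sequence of Sha-groups in Stage~1 really is exact — not just a complex — which requires a careful snake-lemma style comparison of the $K$-level and $\prod K_v$-level long exact sequences.
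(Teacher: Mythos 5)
Your skeleton is recognizable — reduce from $\tilde{C}$ to tori via the distinguished triangle, then invoke Hypothesis~\ref{hypo:Sha2Vanish} through a restriction--corestriction argument over $L$ — but there are two substantive problems, one being a detour the paper avoids and one being a genuine gap that leaves the lemma unproved.

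First, the detour. You worry correctly that the three-term sequence $\Sha^{d+1}_S(\widetilde{\Tsc}) \to \Sha^{d+1}_S(\tilde{C}) \to \Sha^{d+2}_S(\tilde{T})$ need not be exact in the middle: if $x\in\Sha^{d+1}_S(\tilde{C})$ dies in $\Sha^{d+2}_S(\tilde{T})$ then $x$ lifts to $H^{d+1}(K,\widetilde{\Tsc})$, but there is no reason for that lift to lie in $\Sha^{d+1}_S$ since the local maps $H^{d+1}(K_v,\widetilde{\Tsc})\to H^{d+1}(K_v,\tilde{C})$ need not be injective. No snake-lemma massage repairs this in general. The paper sidesteps the issue entirely: with $T$ chosen so that $\Tsc$ is quasi-trivial (possible under Hypothesis~\ref{hypo:GscWA}), Lemma~\ref{Nullities}(ii) gives $\mathbb{H}^{d+1}(K,\widetilde{\Tsc})=\mathbb{H}^{d+1}(K_v,\widetilde{\Tsc})=0$, and then $\Sha^{d+1}_S(\tilde{C})\hookrightarrow\Sha^{d+2}_S(\tilde{T})$ is simply an injection — stronger than exactness and requiring no chase. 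For the same reason the flasque resolution machinery in your Stage~2 is unnecessary: you don't need to resolve $T$ or $\Tsc$ at all, since the only torus whose $\Sha$-group you still need to control is $T$ itself, and that is done directly by restriction--corestriction to $L$ where Hypothesis~\ref{hypo:Sha2Vanish} gives $\Sha^{d+2}_{S'}(L,\tilde{T})\cong\Sha^{d+2}_{S'}(L,\mathbb{Z}(d))^m=0$.

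Second, and more seriously: even if Stages 1--3 were executed as you describe, restriction--corestriction only ever shows that $\Sha^{d+2}_S(K,\tilde{T})$ (hence $\Sha^{d+1}_S(K,\tilde{C})$) is killed by $[L:K]$, i.e.\ has \emph{finite exponent}. You acknowledge this in your closing paragraph but then conclude finiteness anyway — that inference is false; bounded torsion groups can be infinite. The entire second half of the paper's proof is devoted precisely to promoting finite exponent to finiteness, and your proposal has no counterpart to it. The paper's upgrade runs as follows: split $\Sha^{d+1}_S(\tilde{C})$ via the exact sequence
$$0\to\Sha^{d+1}(K,\tilde{C})\to\Sha^{d+1}_S(K,\tilde{C})\to\bigoplus_{v\in S}\mathbb{H}^{d+1}(K_v,\tilde{C});$$
the leftmost group injects into $\Sha^{d+2}(K,\tilde{T})$, which is of cofinite type by \cite{Izq16these}*{Chapitre~1, Remarque~3.8}, and a cofinite-type group of finite exponent is finite; the image on the right is a finite-exponent subgroup of a finite sum of cofinite-type groups $\mathbb{H}^{d+1}(K_v,\tilde{C})$ (by \cite{Izq16these}*{Chapitre~1, Lemme~4.10(ii)}), hence also finite. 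Without this cofinite-type input your argument stops at finite exponent and does not prove the lemma.

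Finally, a small point you raise yourself: restriction--corestriction would indeed only force the $L_i$-level groups to be killed by $[L:L_i]$, not to vanish. But this worry evaporates once you drop the flasque resolutions — the paper only ever restricts to the single splitting field $L$ of $T$, over which the hypothesis supplies genuine vanishing.
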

\begin{proof}
We first prove that $\Sha^{d+1}_S(\tilde{C})$ has finite exponent. 
From the distinguished triangles $\tilde{T} \to \tilde{\Tsc} \to \tilde{C} \to \tilde{T}[1]$, one can obtain the following commutative diagram of abelian groups:
\begin{center}
    \begin{tikzcd}
    H^{d+1}(K,\widetilde{\Tsc}) \arrow[r] \arrow{d} & H^{d+1}(K,\tilde{C}) \arrow{r} \arrow{d} & H^{d+2}(K,\tilde{T}) \arrow{r} \arrow{d} &
    H^{d+2}(K,\widetilde{\Tsc}) \arrow{d} \\
    \displaystyle\prod_{v \not\in S} H^{d+1}(K,\widetilde{\Tsc}) \arrow[r]  & \displaystyle\prod_{v \not\in S} H^{d+1}(K,\tilde{C}) \arrow{r} & \displaystyle\displaystyle\prod_{v \not\in S} H^{d+2}(K,\tilde{T}) \arrow{r}  &
    \displaystyle\prod_{v \not\in S} H^{d+2}(K,\widetilde{\Tsc})
    \end{tikzcd}
\end{center}
where each row is exact. By \cref{Cohomology1VanishForQuasiTrivial}(ii), $H^{d+1}(K,\widetilde{\Tsc}) = H^{d+1}(K_v,\widetilde{\Tsc}) = 0$, so we have an exact sequence of groups:
\begin{equation} \label{ShaGroupInjective}
  0 \to \Sha^{d+1}_S(\tilde{C}) \to \Sha^{d+2}_S(\tilde{T}).  
\end{equation}
Recall Hypothesis \ref{hypo:Sha2Vanish} that $L / K$ is a fixed finite Galois extension such that $T$ is split over $L$. Consider the following commutative diagram, where $S^\prime \coloneqq \{ w : w \mid v \text{ for some } v \in S \}$: 
\begin{center}
    \begin{tikzcd}
    H^{d+2}(K,\tilde{T}) \arrow[d] \arrow[r, "\operatorname{res}"] &
    H^{d+2}(L,\tilde{T}) \arrow[d] \arrow[r, "\operatorname{cor}"] &
    H^{d+2}(K,\tilde{T}) \\
    \prod_{v \in S} H^{d+2}(K_v, \tilde{T}) \arrow[r] &
    \prod_{w \in S'} H^{d+2}(L_w,\tilde{T}).
    \end{tikzcd}
\end{center}
Taking the kernels of the two horizontal arrows yields maps
\[
\Sha^{d+2}_S(K,\tilde{T}) \stackrel{\operatorname{res}}{\longrightarrow} \Sha^{d+2}_{S'}(L,\tilde{T}) \stackrel{\operatorname{cor}}{\longrightarrow}
H^{d+2}(K,\tilde{T}).
\]
By our Hypothesis \ref{hypo:Sha2Vanish}, we obtain
\[
\Sha^{d+2}_{S'}(L,\tilde{T}) \cong \Sha^{d+2}_{S^\prime}(L, \mathbb{Z}(d))^m = 0, 
\]
but $\operatorname{cor} \circ \operatorname{res} = [L:K] \cdot \operatorname{Id}$, so we conclude that $\Sha^{d+2}_S(K,\tilde{T})$ is of finite exponent. Then the injection (\ref{ShaGroupInjective}) implies that the group $\Sha^{d+1}_S(K, \tilde{C})$ has finite exponent as well.

Subsequently, we turn to the following exact sequence:
\[
0 \to \Sha^{d+1}(K, \tilde{C}) \to \Sha^{d+1}_S(K, \tilde{C}) \to \bigoplus_{v \in S} H^{d+1}(K_v, \tilde{C}).
\]
We obtain the following injection employing the same method as in (\ref{ShaGroupInjective})

\[
0 \to \Sha^{d+1}(\tilde{C}) \to \Sha^{d+2}(\tilde{T}).  
\]

According to \cite{Izq16these}*{Chapitre~1, Remarque~3.8}, $\Sha^{d+2}(\tilde{T})$ is of cofinite type. We conclude that $\Sha^{d+1}(K,\tilde{C})$ is a finite group. Let $A$ denote the image of $\Sha^{d+1}_S(K,\tilde{C})$ in $\bigoplus_{v \in S} H^{d+1}(K_v, \tilde{C})$. Then $A$ has finite exponent as $\Sha_S^{d+1}(K,\tilde{C})$ has finite exponent. Since $H^{d+1}(K_v, \tilde{C})$ is of cofinite type according to \cite{Izq16these}*{Chapitre~1, Lemme~4.10(ii)}, $A$ is of cofinite type. Thus $A$ is a finite group. In conclusion, $\Sha^{d+1}_S(K,\tilde{C})$ is a finite group as desired. 
\end{proof}

The following theorem improves a special case of 
\cite{Izq16these}*{Chapitre 1, Corollarie 4.18}.

\begin{thm} [Global duality]\label{GlobalDuality}
    There is a perfect pairing of finite groups
    $$\Sha^1(K, C) \times \Sha^{d+1}(K, \tilde{C}) \to \mathbb{Q} / \mathbb{Z}.$$
\end{thm}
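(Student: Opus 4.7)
The plan is to deduce the theorem from \cite{Izq16these}*{Chapitre~1, Corollarie~4.18}, which already supplies a perfect pairing $\overline{\Sha^1(K,C)} \times \Sha^{d+1}(K,\tilde{C}) \to \mathbb{Q}/\mathbb{Z}$. The improvement claimed here amounts to showing that under \Cref{hypo:Sha2Vanish} and \Cref{hypo:GscWA} the group $\Sha^1(K,C)$ is already finite, so that the natural surjection $\Sha^1(K,C) \twoheadrightarrow \overline{\Sha^1(K,C)}$ is an isomorphism and both sides of the pairing are finite abelian groups. The finiteness of $\Sha^{d+1}(K,\tilde{C})$ is \Cref{ShaFinite} applied with $S = \varnothing$, so only the finiteness of $\Sha^1(K,C)$ requires genuine work.

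For that, I would mirror the proof of \Cref{ShaFinite} on the opposite side of the duality. The distinguished triangle $T^{\mathrm{sc}} \to T \to C \to T^{\mathrm{sc}}[1]$ produces a long exact sequence in Galois hypercohomology, and by \Cref{hypo:GscWA} I may assume $T^{\mathrm{sc}}$ is quasi-trivial. Hilbert 90 then kills $H^1(K, T^{\mathrm{sc}})$ and every $H^1(K_v, T^{\mathrm{sc}})$, yielding
\[
0 \to \Sha^1(K, T) \to \Sha^1(K, C) \to \Sha^2(K, T^{\mathrm{sc}}).
\]
The splitting field $L$ of $T$ from \Cref{hypo:Sha2Vanish} also splits $T^{\mathrm{sc}}$, since $T^{\mathrm{sc}}$ is isogenous to the subtorus $T^{\mathrm{ss}} \subset T$ and isogenous tori share a splitting field. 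Over $L$ both are split tori, so $\Sha^1(L, T_L) = 0$ by Hilbert 90 and $\Sha^2(L, T^{\mathrm{sc}}_L) \cong \Sha^2(L, \mathbb{G}_m)^m = 0$ by Shapiro's lemma together with \Cref{hypo:Sha2Vanish}; hence $\Sha^1(L, C_L) = 0$. A standard restriction-corestriction sandwich $\Sha^1(K,C) \xrightarrow{\mathrm{res}} \Sha^1(L, C_L) \xrightarrow{\mathrm{cor}} \Sha^1(K,C)$ with composition $[L:K] \cdot \mathrm{id}$ then annihilates $\Sha^1(K,C)$ by $[L:K]$, giving finite exponent.

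To upgrade finite exponent to outright finiteness, I would couple this bound with the cofinite-type input developed in \cite{Izq16these}*{Chapitre~1}, applied to $\Sha^1(K,T)$ and to $\Sha^2(K, T^{\mathrm{sc}}) \cong \bigoplus_i \Sha^2(L_i, \mathbb{G}_m)$ (writing $T^{\mathrm{sc}} = \prod \Res_{L_i/K} \mathbb{G}_m$); a group of cofinite type with finite exponent is necessarily finite. With $\Sha^1(K,C)$ finite, $\overline{\Sha^1(K,C)} = \Sha^1(K,C)$, and the perfect pairing of finite groups follows from Izquierdo's result. The most delicate step I anticipate is verifying the cofinite-type assertions in precisely the form required by this argument; the rest is a faithful transposition of the Pontryagin-dual argument already carried out in \Cref{ShaFinite}.
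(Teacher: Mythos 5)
Your proposal is correct but takes a genuinely different route from the paper. The paper's own proof reads Izquierdo's result as a perfect pairing $\Sha^1(K,C) \times \overline{\Sha^{d+1}(K,\tilde{C})} \to \mathbb{Q}/\mathbb{Z}$ (overline on the $\tilde{C}$-side only), observes that $\Sha^{d+1}(K,\tilde{C}) \subseteq \Sha^{d+1}_S(K,\tilde{C})$ is finite by \Cref{ShaFinite}, concludes the maximal divisible subgroup vanishes, and stops there; finiteness of $\Sha^1(K,C)$ is then automatic as the Pontryagin dual of a finite group. You instead follow the phrasing in the paper's introduction (where the overline is placed on $\Sha^1(K,C)$) and therefore supply a direct proof that $\Sha^1(K,C)$ is finite, running the triangle $T^{\mathrm{sc}} \to T \to C$ through Hilbert~90, the restriction--corestriction sandwich over the splitting field $L$, and a cofinite-type argument. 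Your restriction--corestriction step is sound, and the reduction to $\Sha^1(L,C_L)=0$ is correct (though the citation of Shapiro's lemma is superfluous once $T^{\mathrm{sc}}_L$ is already split). The two readings of \cite{Izq16these}*{Chapitre~1, Corollaire~4.18} cannot both be exact, and the paper's introduction and proof do in fact disagree on this point; your version has the merit of establishing the finiteness of $\Sha^1(K,C)$ independently of which side Izquierdo's overline actually sits on, so it is the more robust of the two arguments even if it is longer.

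One simplification you should note: the cofinite-type input you flag as the delicate step is not actually needed. Once $\Sha^1(K,C)$ is shown to have finite exponent by restriction--corestriction, its maximal divisible subgroup is killed by that exponent, hence trivial (a divisible group annihilated by an integer is zero), so $\Sha^1(K,C) = \overline{\Sha^1(K,C)}$ immediately. Finiteness then follows from the perfect pairing with the finite group $\Sha^{d+1}(K,\tilde{C})$ rather than from any cofinite-type classification. This removes the one place where you expressed uncertainty. A last minor caveat, shared with the paper's own \Cref{ShaFinite}: \Cref{hypo:GscWA} only guarantees that $G^{\mathrm{sc}}$ possesses \emph{some} quasi-trivial maximal torus, not that $T^{\mathrm{sc}} = \rho^{-1}(T)$ for the specific $T$ fixed in \Cref{hypo:Sha2Vanish} is quasi-trivial; strictly speaking both proofs should arrange the choice of $T$ so that these two tori coincide.
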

\begin{proof}
Since $\BX_{\ast}(\Tsc) \to \BX_{\ast}(T)$ is injective, by \cite{Izq16these}*{Chapitre 1, Corollarie 4.18}, we have the following perfect pairing of abelian groups:
\[\Sha^1(K,C) \times \overline{\Sha^{d+1}(K, \tilde{C})} \to 
\mathbb{Q} / \mathbb{Z},\]
where $\overline{\Sha^{d+1}(K, \tilde{C})}$ denotes the quotient of $\Sha^{d+1}(K, \tilde{C})$ by its maximal divisible subgroup. But by definition  $\Sha^{d+1}(K, \tilde{C}) \subseteq \Sha^{d+1}_S(K,\tilde{C})$, thus it is a finite group. Therefore, the maximal divisible subgroup of $\Sha^{d+1}(K,\tilde{C})$ is trivial. 
\end{proof}


\section{The abelianization map}
Let $G$ be a connected reductive $K$-group
and let $T \subseteq G$ be a maximal torus. 
We write $T^{sc}$ for $\rho^{-1}(T) \subseteq G^{sc}$. Consider the complex of tori
$C = (T^{sc} \to T)$
where $T^{sc}$ is in degree $-1$ and $T$ is in degree $0$. According to \cite{Bor98}, we have the following abelianization map:
\[
\operatorname{ab}^0: H^0(K, G) \to H^0_{\rm{ab}}(K, G)  \coloneqq \mathbb{H}^0(K , C).
\]
We use this abelianization map to construct the third and fourth arrows in the sequence (\ref{Seq:FiniteMainTheorem}). 

\begin{para}\label{construction}
The morphism 
$
G(K_v) \to \Sha^{d+1}_S(K,\tilde{C})
$
is constructed as the composition of
\begin{equation} \label{seq:ConstuctionOfArrow}
    G(K_v) =  H^0(K_v, G) 
     \stackrel{\mrm{ab}^0}{\to}\mathbb{H}^0(K_v,C)
     \to \mathbb{H}^0(K_v,C)^{\wedge}
     \stackrel{\sim}{\to} \mathbb{H}^{d+1}(K_v,\tilde{C})^D 
     \to \Sha_S^{d+1}(K,\tilde{C})^D. 
\end{equation}
More explicitly, recall \cref{LocalDuality}, let $j_v$ denote the isomorphism $\mathbb{H}^{d+2}(K_v,\mathbb{Q}/\mathbb{Z}(d+1))\rightarrow\mathbb{Q}/\mathbb{Z}$, let $\hat{\operatorname{ab}}^0_v$ (resp. $\hat{\operatorname{ab}}^0$) denote the composition $G(K_v)\to\mathbb{H}^0(K_v,C)\to\mathbb{H}^0(K_v,C)^\wedge$ (resp. for $K$) and let $\operatorname{loc}_v$ denote the map $\mathbb{H}^r(K,\tilde{C})\rightarrow\mathbb{H}^r(K_v,\tilde{C})$. 
For any $c\in\Sha_S^{d+1}(K,\tilde{C})$ and $g\in G(K)$, the pairing of the image of $g$ in $\Sha_S^{d+1}(K,\tilde{C})^D$ is
\[(g,c)\colonequals\sum_{v\in S}j_v\left(\hat{\operatorname{ab}}^0_v(g)\cup \operatorname{loc}_v(c)\right)=\sum_{v\in X^{(1)}}j_v\left(\hat{\operatorname{ab}}^0_v(g)\cup \operatorname{loc}_v(c)\right).\]
\end{para}

\begin{prop}
The sequence below is a complex
    \begin{equation}\label{complex}
      1 \rightarrow \overline{G(K)}_S \rightarrow \prod_{v\in S}G(K_v) \rightarrow \Sha_S^{d+1}(K,\tilde{C})^D,
      \end{equation}
where $\overline{G(K)}_S$ is the closure of $G(K)$ in $\prod_{v\in S}G(K_v)$. 
\end{prop}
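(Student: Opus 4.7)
The plan is to prove the proposition in two independent steps: first, to show that the composite $G(K) \to \Sha_S^{d+1}(K,\tilde{C})^D$ vanishes on the diagonally embedded rational points, and second, to show that the map $\prod_{v \in S} G(K_v) \to \Sha_S^{d+1}(K,\tilde{C})^D$ is continuous for the natural topologies. Together these will extend the vanishing from $G(K)$ to its closure $\overline{G(K)}_S$. The injectivity of $\overline{G(K)}_S \hookrightarrow \prod_{v \in S} G(K_v)$ is tautological and requires no argument.

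For the first step, I would fix $g \in G(K)$ and $c \in \Sha_S^{d+1}(K,\tilde{C})$ and exploit the functoriality of the abelianization, which gives $\hat{\operatorname{ab}}^0_v(g) = \operatorname{loc}_v(\hat{\operatorname{ab}}^0(g))$ for every $v \in X^{(1)}$. Combined with the compatibility of the cup product with localization, the formula from \cref{construction} rewrites the pairing as
\[
(g,c) = \sum_{v \in X^{(1)}} j_v\bigl(\operatorname{loc}_v(\hat{\operatorname{ab}}^0(g) \cup c)\bigr),
\]
i.e.\ as the sum of local invariants of a single global class in $\mathbb{H}^{d+2}(K, \mathbb{Q}/\mathbb{Z}(d+1))$ (after the identifications of \cref{prop:MotivicComplex}). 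I would then invoke the global reciprocity statement of \cite{Izq16these} to conclude that this sum vanishes. Pinning down the precise reciprocity statement and checking that the sign conventions for $j_v$ are consistent with the identifications from \cref{prop:MotivicComplex} is the main obstacle in this step.

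For the continuity step, \cref{ShaFinite} says the target $\Sha_S^{d+1}(K,\tilde{C})^D$ is a finite (hence discrete) group, so it suffices to exhibit an open kernel in each factor $G(K_v)$. I would check this factor by factor along the composition (\ref{seq:ConstuctionOfArrow}): the abelianization map $G(K_v) \to \mathbb{H}^0(K_v, C)$ is continuous by \cite{Bor98}, the canonical map $\mathbb{H}^0(K_v, C) \to \mathbb{H}^0(K_v, C)^{\wedge}$ to the profinite completion is continuous by construction, and the local duality isomorphism of \cref{LocalDuality} identifies $\mathbb{H}^0(K_v, C)^{\wedge}$ with the Pontryagin dual of the discrete torsion group $\mathbb{H}^{d+1}(K_v, \tilde{C})$. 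These assemble into a continuous homomorphism $\prod_{v \in S} G(K_v) \to \Sha_S^{d+1}(K,\tilde{C})^D$ with finite discrete target; since it kills the dense subgroup $G(K)$ by the first step, it must kill the closure $\overline{G(K)}_S$, as required.
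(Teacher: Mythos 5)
Your proposal is correct and follows essentially the same route as the paper: both rewrite the pairing $(g,c)$ as a sum over all places of local invariants of the single global class $\hat{\operatorname{ab}}^0(g)\cup c$, then appeal to the global reciprocity law (the exactness at the middle of the Poitou--Tate sequence that the paper extracts from \cite{Izq16these}*{Chapitre~1, Th\'eor\`eme~2.7} by taking $F=\mu_n^{\otimes(d+1)}$ and passing to the limit), and finally pass from $G(K)$ to its closure using continuity and finiteness of the target (\cref{ShaFinite}). If anything, your treatment of the continuity step is more explicit than the paper's, which compresses it into a single clause; the reciprocity input you leave to be pinned down is exactly what the paper spells out.
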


\begin{proof}
Taking $F=\mu_n^{\otimes (d+1)}$ in \cite{Izq16these}*{Chapitre~1, Th\'eor\`eme~2.7}, we obtain an exact sequence
$$H^{d+2}\left(K,\mu_n^{\otimes(d+1)}\right)\to\bigoplus_{v\in X^{(1)}}H^{d+2}\left(K_v,\mu_n^{\otimes(d+1)}\right)\to H^0\left(K,\mathbb{Z}/n\mathbb{Z}\right)^D\to 0.$$
Since $H^0\left(K,\mathbb{Z}/n\mathbb{Z}\right)=\mathbb{Z}/n\mathbb{Z}$, taking direct limit for all $n$ yields an exact sequence

\[0\to \mathbb{H}^{d+2}\left(K,\mathbb{Q}/\mathbb{Z}(d+1)\right)\to \bigoplus_{v\in X^{(1)}}\mathbb{H}^{d+2}\left(K_v,\mathbb{Q}/\mathbb{Z}(d+1)\right)\to \mathbb{Q}/\mathbb{Z}.\]
By Proposition \ref{prop:MotivicComplex}(ii) we conclude the exactness of
\begin{equation}\label{PT}
    0\to\mathbb{H}^{d+3}\left(K,\mathbb{Z}(d+1)\right)\xrightarrow{\operatorname{Loc}_v} \bigoplus_{v\in X^{(1)}}\mathbb{H}^{d+3}\left(K_v,\mathbb{Z}(d+1)\right)\xrightarrow{\sum j_v} \mathbb{Q}/\mathbb{Z}.
\end{equation}

By (\ref{construction}), for any $c\in\Sha_S^{d+1}(K,\tilde{C})$ and $g\in G(K)$, we have
\[
\begin{aligned}
    (g,c)=\sum_{v\in X^{(1)}}j_v\left(\hat{\operatorname{ab}}^0_v(g)\cup \operatorname{loc}_v(c)\right)=\sum_{v\in X^{(1)}}j_v\circ\operatorname{Loc}_v\left({\hat{\operatorname{ab}}}^0(g)\cup c\right)=0\ \ \text{(since (\ref{PT}) is exact)}. 
\end{aligned}
\]
Thus, by the continuity of $\overline{G(K)}_S \rightarrow \prod_{v\in S}G(K_v)$, (\ref{complex}) forms a complex. 
\end{proof}

\section{Proof of the main theorem}

In this section, we finish the proof of the main theorem.


\begin{lem} \label{Lem1}
If the sequence \textup{(\ref{Seq:FiniteMainTheorem})} is exact for $G^m \times Q$,
where $m\ge 1$ is an integer and $Q$ is a quasi-trivial torus,
then it is exact for $G$.
\end{lem}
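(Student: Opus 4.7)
The plan is to observe that each term of the sequence (\ref{Seq:FiniteMainTheorem}) is compatible with direct products of reductive groups, so that the sequence for $G^m \times Q$ is naturally a direct product of $m$ copies of the sequence for $G$ and one copy of the sequence for $Q$. Since exactness of a direct product of sequences is equivalent to exactness of each factor, the exactness for $G$ will then follow by projecting to a $G$-factor.

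To set this up, choose the maximal torus $T^m \times Q$ of $G^m \times Q$, where $T$ is the fixed maximal torus of $G$. Because $Q$ is already a torus, $(G^m \times Q)^{\mathrm{sc}} = (\Gsc)^m$ with corresponding maximal torus $(\Tsc)^m$, and the two-term complex of tori from (\ref{ShortComplex}) splits as
$$C_{G^m \times Q} \;=\; [(\Tsc)^m \to T^m \times Q] \;\cong\; C^{\oplus m} \oplus [0 \to Q],$$
and dually $\tilde{C}_{G^m \times Q} \cong \tilde{C}^{\oplus m} \oplus [\tilde{Q} \to 0]$. Taking Galois hypercohomology and passing to Tate--Shafarevich groups commutes with direct sums, so both $\Sha^1(K, C_{G^m \times Q})$ and $\Sha_S^{d+1}(K, \tilde{C}_{G^m\times Q})^D$ decompose as direct products accordingly.

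On the topological side, $\prod_{v \in S} (G^m \times Q)(K_v) = \left(\prod_{v\in S} G(K_v)\right)^m \times \prod_{v\in S} Q(K_v)$, and since the closure of a product of subsets in a product topology equals the product of closures, we have $\overline{(G^m \times Q)(K)}_S = \overline{G(K)}_S^m \times \overline{Q(K)}_S$. The arrows in (\ref{Seq:FiniteMainTheorem}) are built from the abelianization map of Section~4 and the cup-product and duality pairings of Section~3, both of which are functorial in $G$ and additive on direct sums of complexes. Consequently, the sequence (\ref{Seq:FiniteMainTheorem}) for $G^m \times Q$ is (as a sequence of topological groups) the direct product of $m$ copies of the sequence for $G$ and one copy of the sequence for $Q$.

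To finish, I would invoke the elementary fact that a direct product of sequences of (topological) groups is exact if and only if each factor is exact: given $b_1$ in a middle term of one factor with trivial image downstream, the element $(b_1, 0, \ldots, 0)$ has trivial image in the product and hence lifts, by exactness of the product sequence, to an element whose first coordinate furnishes the desired preimage of $b_1$. Applying this to a $G$-factor yields exactness of the sequence for $G$. The only point requiring care is the product decomposition of the closure term $\overline{G(K)}_S$, but this follows from the compatibility of closure with finite products in the product topology.
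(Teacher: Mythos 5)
Your proof is correct, and it takes a cleaner, more uniform route than the paper's. You observe that the whole sequence for $G^m \times Q$ factors term-by-term as a direct product: the complex splits as $C_{G^m\times Q} \cong C^{\oplus m} \oplus [0\to Q]$, Tate--Shafarevich groups and duals commute with finite direct sums, $\prod_{v\in S}(G^m\times Q)(K_v)$ rearranges to $\bigl(\prod_{v\in S}G(K_v)\bigr)^m \times \prod_{v\in S}Q(K_v)$, and the closure of a product is the product of closures, so $\overline{(G^m\times Q)(K)}_S \cong \overline{G(K)}_S^m \times \overline{Q(K)}_S$. Then the elementary fact that exactness of a product of sequences of groups forces exactness of each factor (padding a given $b_1$ by identity elements and lifting) gives exactness for $G$ by projecting to a $G$-factor. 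This handles $G^m$ and $Q$ simultaneously and avoids having to assume or check anything about the sequence for $Q$ itself.

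The paper instead argues in two steps. The $G^m$ case is dispatched with the phrase ``by construction''; your product argument is essentially the rigorous version of that remark. For the $G\times Q$ case, the paper does not invoke the direct-sum splitting explicitly, but rather compares the sequences for $G$ and $G\times Q$ via the natural maps $\Sha^1(C_Q)\hookrightarrow\Sha^1(C)$ and $\Sha_S^{d+1}(\tilde C)\hookrightarrow\Sha_S^{d+1}(\widetilde{C_Q})$ (both shown injective by diagram chases using Hilbert~90 / vanishing for $\tilde Q$), notes that $Q$ is $K$-rational and hence satisfies weak approximation so the cokernel term $A$ is unchanged, and then performs a final diagram chase using the surjectivity of $\Sha_S^{d+1}(\tilde C)^D\to\Sha^1(C)$ from Theorem~\ref{GlobalDuality}. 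Both approaches rest on the same structural fact, but yours makes the decomposition explicit and then applies a single general lemma; the paper's argument is more piecemeal and leans on the global duality theorem at the end, which yours does not need. One small point you should make explicit to be fully rigorous is that the arrow $\prod_{v\in S}G(K_v)\to \Sha_S^{d+1}(\tilde C)^D$ decomposes: this requires that the abelianization map $\mathrm{ab}^0$, the cup-product pairing, and the restriction $\Sha^{d+1}(\tilde C)\hookrightarrow\Sha_S^{d+1}(\tilde C)$ are all additive with respect to $C_{G^m\times Q}\cong C^{\oplus m}\oplus[0\to Q]$; this is true by functoriality, but the paper's more concrete diagrams sidestep having to assert it.
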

\begin{proof}
If (\ref{Seq:FiniteMainTheorem}) is exact for $G^m$, 
then it is exact for $G$ by construction. 
Now suppose that (\ref{Seq:FiniteMainTheorem}) is exact for $G \times_K Q$ for some quasi-trivial $K$-torus $Q$.
Since $T \subseteq G$ is a maximal torus, 
$T \times Q$ is a maximal torus of $G \times Q$. Moreover, the derived subgroup of $G\times Q$ is $\Gss$, so we have a composite $\rho_Q: \Gsc \to \Gss \to G \times Q$. We introduce the complex $C_Q = [\Tsc \to T \times Q]$ which is concentrated in degree -1 and 0. 

From the distinguished triangle
$\Tsc\to T\to C\to\Tsc[1]$,
we obtain the following commutative diagram with exact rows: 
\begin{center}
\begin{tikzcd}
0 \arrow{r} & H^1(K, T) \arrow{r} \arrow{d} & \mathbb{H}^1(K, C) \arrow{r} \arrow{d} & H^2(K, \Tsc) \arrow{d} \\
0 \arrow{r} & \displaystyle\prod_{v \in X^{(1)}}H^1(K_v,T) \arrow{r} & \displaystyle\prod_{v \in X^{(1)}}\mathbb{H}^1(K_v, C) \arrow{r} & \displaystyle\prod_{v \in X^{(1)}}H^2(K_v,\Tsc).
\end{tikzcd}
\end{center}
Hence, by taking kernel, there is an exact sequence 
$0\to\Sha^1(T)\to\Sha^1(C)\to\Sha^2(\Tsc)$. According to the functoriality of cohomology, we have the following commutative diagram:
\begin{center}
    \begin{tikzcd}
    0 \arrow{r} & \Sha^1(T \times Q) \arrow{r} \arrow[d, equal] & \Sha^1(C_Q) \arrow{r} \arrow{d} & \Sha^2(\Tsc) \arrow[d, equal] \\
    0 \arrow{r} & \Sha^1(T) \arrow{r} & \Sha^1(C) \arrow{r} & \Sha^2(\Tsc),
    \end{tikzcd}
\end{center}
where $\Sha^1(T \times Q) = \Sha(T)$ follows from $H^1(K,Q)=0$. A diagram chase yields the injectivity of $\Sha^1(C_Q) \to \Sha^1(C)$.

Similarly, using the distinguished triangles 
$\tilde{T}\times\tilde{Q}\to\widetilde{\Tsc}\to\tilde{C_Q}\rightarrow\tilde{T}[1]$
and    $\tilde{T}\to\widetilde{\Tsc}\to\tilde{C}\rightarrow\tilde{T}[1]$,
we obtain a commutative diagram with exact rows
\begin{center}
    \begin{tikzcd}
    0 \arrow[r] &
    \Sha^{d+1}_S(\tilde{C}) \arrow[r] \arrow[d] &
    \Sha^{d+2}_S(\tilde{T}) \arrow[d]\\
    0 \arrow[r] &
    \Sha^{d+1}_S(\widetilde{C_Q}) \arrow[r] &
    \Sha^{d+2}_S(\tilde{T}\times\tilde{Q}),
    \end{tikzcd}
\end{center}
where $\Sha_S^{d+2}(\tilde{T}) \to \Sha_S^{d+2}(\tilde{T} \times \tilde{Q})$ is injective. A diagram chase yields the injectivity of the map $\Sha^{d+1}_S(\tilde{C}) \to \Sha_S^{d+1}(\widetilde{C_Q})$.

Finally, recall that quasi-trivial $K$-tori are $K$-rational. Hence in particular $Q$ satisfies the weak approximation condition. 
It follows that the cokernel of the first map in (\ref{Seq:FiniteMainTheorem}) is stable under multiplying $G$ by a quasi-trivial torus. Let $A$ denote those two cokernels, then we have the following commutative diagram:
\begin{center}
    \begin{tikzcd}
    0 \arrow{r} & A \arrow{r} \arrow[d, equal] & \Sha^{d+1}_S(\widetilde{C_Q})^D \arrow{r} \arrow[d, two heads] & \Sha^1(C_Q) \arrow{r} \arrow[d, hook] & 0 \\
    0 \arrow{r} & A \arrow{r} & \Sha^{d+1}_S(\tilde{C})^D \arrow{r} & \Sha^1(C) \arrow{r} & 0,
    \end{tikzcd}
\end{center}
where the first row is exact by assumption, and the morphism $\Sha_S^{d+1}(\tilde{C})^D \to \Sha^1(C)$ is surjective since $\Sha^1(C) \cong \Sha^{d+1}(\tilde{C})^D$ by Theorem \ref{GlobalDuality}. A diagram chase yields that the sequence (\ref{Seq:FiniteMainTheorem}) is exact for $G$.
\end{proof}

Thus, by \cite{San81}*{Lemme~1.10}, we may assume that $G$ admits a special cover.

\begin{lem}\label{Lemma2}
The groups $\prod_{v \in S}H^0(K_v,C)$ and $\prod_{v \in S}H^0(K_v,C)^{\wedge}$ have the same image in $\Sha^{d+1}_{S}(\tilde{C})^D$.
\end{lem}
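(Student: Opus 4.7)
The plan is to exploit the fact that $\Sha^{d+1}_S(K,\tilde{C})^D$ is a finite discrete group, so any continuous map from a topological group into it is determined on a dense subset. By \Cref{ShaFinite}, $\Sha^{d+1}_S(K,\tilde{C})$ is finite, and therefore so is its Pontryagin dual $\Sha^{d+1}_S(K,\tilde{C})^D$, which we endow with the discrete topology.

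First I would check that the map
$$
\prod_{v\in S}\mathbb{H}^0(K_v,C)^{\wedge}\longrightarrow \Sha^{d+1}_S(K,\tilde{C})^D
$$
is continuous for the profinite topology on the source. Via \Cref{LocalDuality} it factors as the isomorphism $\prod_{v\in S}\mathbb{H}^0(K_v,C)^{\wedge}\xrightarrow{\sim}\prod_{v\in S}\mathbb{H}^{d+1}(K_v,\tilde{C})^D$ followed by the Pontryagin dual of the inclusion $\Sha^{d+1}_S(K,\tilde{C})\hookrightarrow\bigoplus_{v\in S}\mathbb{H}^{d+1}(K_v,\tilde{C})$ of discrete torsion groups. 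Each factor is continuous, and the target is finite discrete.

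Next, for each $v\in S$ the natural map $\mathbb{H}^0(K_v,C)\to\mathbb{H}^0(K_v,C)^{\wedge}$ has dense image by the very definition of the completion (as a limit over finite-index open subgroups). Taking a finite product preserves density, so
$$
\prod_{v\in S}\mathbb{H}^0(K_v,C)\longrightarrow \prod_{v\in S}\mathbb{H}^0(K_v,C)^{\wedge}
$$
has dense image as well.

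Finally I would invoke the elementary observation that if $f:A\to B$ is a continuous map from a topological space to a discrete space and $A_0\subseteq A$ is dense, then $f(A_0)=f(A)$: indeed, for any $a\in A$ the singleton $\{f(a)\}$ is open in $B$, so $f^{-1}(f(a))$ is an open neighborhood of $a$ and must meet $A_0$. Applied here, this yields the equality of the two images in $\Sha^{d+1}_S(K,\tilde{C})^D$. I do not foresee a serious obstacle; the whole argument is topological and rests only on the finiteness input from \Cref{ShaFinite} together with the continuity of local duality in \Cref{LocalDuality}.
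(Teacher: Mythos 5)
Your proof is correct and follows the same route as the paper: finiteness of $\Sha_S^{d+1}(K,\tilde{C})^D$ from \Cref{ShaFinite} plus density of $\prod_{v\in S}\mathbb{H}^0(K_v,C)$ in its completion. You are more explicit about the continuity check (which the paper leaves implicit), and the only small slip is calling the map $\Sha^{d+1}_S(K,\tilde{C})\to\bigoplus_{v\in S}\mathbb{H}^{d+1}(K_v,\tilde{C})$ an ``inclusion''---its kernel is $\Sha^{d+1}(K,\tilde{C})$, which need not vanish---but since you only use that it is a homomorphism of discrete groups, this does not affect the argument.
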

\begin{proof}
By Lemma \ref{ShaFinite}, $\Sha_S^{d+1}(\tilde{C})^D$ is a finite group.
But the image of $H^0(K_v, C)$ is dense in that of $H^0(K_v,C)^{\wedge}$,
so they have the same image in $\Sha_S^{d+1}(\tilde{C})^D$.
\end{proof}

\begin{lem} \label{FirstExact}
For any finite subset $S \subseteq X^{(1)}$, the following sequence is exact:
$$
1 \rightarrow \overline{G(K)}_S \rightarrow \prod_{v\in S}G(K_v) \rightarrow \Sha_S^{d+1}(K,\tilde{C})^D.
$$
\end{lem}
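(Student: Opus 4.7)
The first map is the inclusion of a closed subgroup and hence injective, and the composite is zero by the preceding proposition, so the work is to establish exactness at $\prod_{v \in S} G(K_v)$. Fix $(g_v)_{v \in S}$ whose pairing with every $c \in \Sha_S^{d+1}(K,\tilde{C})$ vanishes; the goal is to approximate $(g_v)_{v \in S}$ diagonally by elements of $G(K)$. By \Cref{Lem1} combined with \cite{San81}*{Lemme~1.10}, I may assume $G$ admits a special covering $G_0 = \Gsc \times Q_0 \twoheadrightarrow G$ with $Q_0$ quasi-trivial. Under \Cref{hypo:GscWA}, $\Gsc$ (and hence $G_0$) satisfies weak approximation, and the special covering hypothesis makes the abelianization sequence well-behaved at both the global and local levels.

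Next I transport the problem to the complex $C$ via the abelianization. By Construction~\ref{construction}, the hypothesis on $(g_v)_{v\in S}$ is equivalent to the vanishing of the pairing of $(\widehat{\operatorname{ab}}^0_v(g_v))_{v \in S} \in \prod_{v\in S}\mathbb{H}^0(K_v, C)^\wedge$ with $\Sha_S^{d+1}(K,\tilde{C})$, and \Cref{Lemma2} identifies the image of this family in $\Sha_S^{d+1}(K,\tilde{C})^D$ with the image coming from $\prod_{v\in S} \mathbb{H}^0(K_v, C)$. The central step is then to invoke a Poitou--Tate type exact sequence for the complex $C$, dual to \Cref{GlobalDuality} and extracted from \cite{Izq16these}, which identifies the kernel of
\[
\prod_{v\in S}\mathbb{H}^0(K_v, C) \longrightarrow \Sha_S^{d+1}(K, \tilde{C})^D
\]
with the closure of the diagonal image of $\mathbb{H}^0(K, C)$. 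Applying this yields a global class $c \in \mathbb{H}^0(K, C)$ whose local images approximate $(\operatorname{ab}^0_v(g_v))_{v \in S}$ to any prescribed precision at the places of $S$.

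Finally I lift the approximation back to $G$. Using the special covering, the global abelianization is surjective, so $c = \operatorname{ab}^0(g)$ for some $g \in G(K)$. After replacing $g_v$ by $g_v g^{-1}$, each local image in $\mathbb{H}^0(K_v, C)$ becomes arbitrarily small, so at the cost of refining the approximation once more, each $g_v g^{-1}$ lifts to an element of $\Gsc(K_v)$ via the exact sequence $\Gsc(K_v) \to G(K_v) \to \mathbb{H}^0(K_v, C)$. Weak approximation for $\Gsc$ (\Cref{hypo:GscWA}) then yields a single global $\Gsc$-element whose local images at $S$ match those lifts; its image in $G(K)$, times $g$, is the desired diagonal approximation of $(g_v)_{v \in S}$. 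The main technical obstacle I anticipate is formulating the Poitou--Tate exactness for the complex $C$ in precisely the form above, since one must reconcile the profinite completion $\mathbb{H}^0(K_v, C)^{\wedge}$ with the topology on $\mathbb{H}^0(K_v, C)$ itself, where \Cref{Lemma2} will do the key bridging; ensuring that the lifting through $\Gsc$ respects the approximation tolerance, and that no hidden obstruction from the kernel of $G_0 \to G$ appears, is the other delicate point that will require care.
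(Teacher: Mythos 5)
Your overall strategy is sound and matches the paper's in two respects: reducing via \Cref{Lem1} and \cite{San81}*{Lemme~1.10} to the case where $G$ admits a special covering $1 \to F_0 \to G_0 \to G \to 1$, and ultimately using weak approximation for $G_0$ (\Cref{hypo:GscWA}) to produce the diagonal approximation. The point where you diverge from the paper, and where a genuine gap appears, is the ``central step.'' You propose to invoke a Poitou--Tate exactness directly for the complex $C$, identifying the kernel of $\prod_{v\in S}\mathbb{H}^0(K_v,C) \to \Sha_S^{d+1}(K,\tilde{C})^D$ with the closure of the diagonal image of $\mathbb{H}^0(K,C)$. You rightly flag this as the main technical obstacle, because it is not a result available off the shelf in the form you need: the local pairing uses the profinite completion $\mathbb{H}^0(K_v,C)^{\wedge}$, and passing between $\mathbb{H}^0(K_v,C)$ and $\mathbb{H}^0(K_v,C)^\wedge$ loses control of kernels; \Cref{Lemma2} only equates \emph{images} in the finite target, which is insufficient to deduce the kernel statement. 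Moreover your subsequent lifting step---``each local image in $\mathbb{H}^0(K_v,C)$ becomes arbitrarily small, so each $g_v g^{-1}$ lifts to $\Gsc(K_v)$''---is not automatic: $\mathbb{H}^0(K_v,C)$ is not discrete, so a small image need not be zero, and you would need to prove openness of $\mathrm{ab}^0$ and show $\ker(\mathrm{ab}^0) = \rho(\Gsc(K_v))$ to make the ``refining once more'' precise.

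The paper's actual proof sidesteps both difficulties by never working with the complex $C$ at the local level. Instead it forms the three-row diagram linking $G_0$, $G$, and $H^1(-,F_0)$: the top row is the cohomology sequence of $1\to F_0\to G_0\to G\to 1$, the middle row is its local analogue over $\prod_{v\in S}$, and the bottom row maps to $\Sha_S^{d+1}(K,\tilde{C})^D \to \Sha_S^{d+1}(K,\tilde{F_0})^D$. Because $F_0$ is finite and contained in a quasi-trivial maximal torus $T_0$ of $G_0$ (by \Cref{hypo:GscWA}), the boundary $G(K) \to H^1(K,F_0)$ is \emph{surjective} ($H^1(K,T_0)=0$ kills the next term), and the Poitou--Tate exactness needed is precisely the known statement for the \emph{finite} module $F_0$ (\cite{Izq16these}*{Chapitre~2, Lemme~2.2}), where $H^1(K_v,F_0)$ is discrete and no profinite-completion subtlety arises. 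The diagram chase then produces, from a tuple $(g_v)$ killed by $\Sha_S^{d+1}(K,\tilde{C})$, a global $g\in G(K)$ with the same coboundary in $\prod H^1(K_v,F_0)$, after which $(g_vg^{-1})$ lifts \emph{exactly} (not approximately) to $\prod G_0(K_v)$ and weak approximation for $G_0$ finishes. So the essential insight you are missing is to trade the complex $C$ for the finite module $F_0$; this is what makes the topology tame and lets the argument close.
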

\begin{proof}
By Lemma $\ref{Lem1}$ , we assume that $G$ admits a special cover, i.e.,
\[
1\to F_0\to G_0\to G\to 1,
\]
where $F_0$ is finite and $G_0=G^{sc}\times Q$. Therefore we have the following commutative diagram whose upper rows are exact
\begin{equation} \label{DiagramInLemma3}
\begin{tikzcd} 
G_0(K)\arrow[r]\arrow[d]&G(K)\arrow[r, "\partial"]\arrow[d]&H^1(K,F_0)\arrow[r]\arrow[d]& H^1(K,G_0)\\
\displaystyle\prod_{v\in S}G_0(K_v)\arrow[r]&\displaystyle\prod_{v\in S}G(K_v)
\arrow[r]\arrow[d]&\displaystyle\prod_{v\in S}H^1(K_v,F_0)
\arrow[d]\\
&\Sha_S^{d+1}(K,\tilde{C})^D\arrow[r]&\Sha_S^{d+1}(K,\tilde{F_0})^D.
\end{tikzcd}
\end{equation}
Since $F_0$ is central in $G_0$, $F_0$ is contained in any maximal torus of $G_0$.
In particular, under hypotheses \ref{hypo:Sha2Vanish}, we can take a quasi-trivial maximal torus $T_0$ of $G_0$, then $F_0 \subseteq T_0$. Consequently the map $H^1(K,F_0) \to H^1(K,G_0)$ factors through $H^1(K,F_0) \to H^1(K,T_0)$. Since $T_0$ is a quasi-trivial torus, we have $H^1(K,T_0) = 0$ by Lemma \ref{Cohomology1VanishForQuasiTrivial}(i). Therefore, $H^1(K,F_0) \to H^1(K, G_0)$ is the trivial map, i.e., $\partial$ is surjective.

We show that the lower square in diagram (\ref{DiagramInLemma3}) commutes. Let $C_0 = [T_0 \to T]$ be the short complex associated to the isogeny $G_0 \to G$. Thus, we have a quasi-isomorphism $F_0[1] \simeq C_0$ of complexes. Note that we have a map $C \to C_0$ of complexes induced by the inclusion $\Gsc \to G$. Both $H^0(K_v,G) \to H^1(K_v,F_0) \simeq \mathbb{H}^0(K_v,G_0)$ and $H^0(K_v,G) \to \mathbb{H}^0(K_v,C) \to \mathbb{H}^0(K_v,C_0)$ are induced by the short exact sequence $1 \to F_0 \to G_0 \to G \to 1$. It follows that the left square in the following diagram commutes:
\begin{center}
    \begin{tikzcd}
    H^0(K_v,G) \arrow{r} \arrow{d} & \mathbb{H}^0(K_v,C) \arrow{r} \arrow{d} & \Sha_S^{d+1}(\tilde{C})^D \arrow{d} \\
    H^1(K_v,F_0) \arrow{r} &\mathbb{H}^0(K_v,C_0) \arrow{r} &  \Sha_S^{d+1}(\widetilde{C_0})^D.
    \end{tikzcd}
\end{center}
The right-hand side square commutes as a consequence of the functoriality of cup-product over $K_v$. Finally, taking the quasi-isomorphisms $F_0[1] \simeq C_0$ into account yields the commutativity of the lower square in diagram (\ref{DiagramInLemma3}).

Recall that $G_0$ satisfies weak approximation condition by Hypothesis \ref{hypo:GscWA} and the third column is exact by \cite{Izq16these}*{Chapitre~2, Lemme~2.2}. A diagram chase now yields the desired exact sequence.
\end{proof}
\begin{thm}\label{MMainThm}
Let $G$ be a connected reductive group satisfying 
\textup{Hypothesis \ref{hypo:Sha2Vanish} and \ref{hypo:GscWA}}.
\begin{enumerate}
    \item [\textup{(i)}]Let $S\subset X^{(1)}$ be a finite set of 
    places. There is an exact sequence of groups
        \begin{equation} \label{Seq:FiniteMainTheoremInChapProof}
      1 \rightarrow \overline{G(K)}_S \rightarrow \prod_{v\in S}G(K_v) \rightarrow \Sha_S^{d+1}(K,\tilde{C})^D \rightarrow \Sha^1(K,C) \rightarrow 0.  
    \end{equation}
    Here $\overline{G(K)}_S$ denotes the closure of 
    the diagonal image of $G(K)$ in $\prod_{v\in S}
    G(K_v)$ for the product topology.
    \item [\textup{(ii)}]There is an exact sequence of groups
      \[
      1 \rightarrow \overline{G(K)} \rightarrow \prod_{v\in X^{(1)}}G(K_v) \rightarrow \Sha_{\omega}^{d+1}(K,\tilde{C})^D \rightarrow \Sha^1(K,C) \rightarrow 0.  
    \]
    Here $\overline{G(K)}$ denotes the closure 
    of the diagonal image of $G(K)$ in $\prod_{v\in X^{(1)}}
    G(K_v)$ for the product topology.
\end{enumerate}
\end{thm}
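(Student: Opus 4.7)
The plan is to build on the preparations of this section: Lemma \ref{FirstExact} already delivers exactness at the first two positions, while Theorems \ref{LocalDuality} and \ref{GlobalDuality}, together with the finiteness result Lemma \ref{ShaFinite}, furnish the tools needed to extend the sequence by two more terms on the right. For part~(i), I would first reduce, via Lemma \ref{Lem1}, to the case where $G$ admits a special covering $1 \to F_0 \to G_0 \to G \to 1$ with $G_0 = \Gsc \times Q$, and then verify the two remaining exactness assertions in turn.

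The surjectivity onto $\Sha^1(K,C)$ is essentially formal. Theorem \ref{GlobalDuality} identifies $\Sha^1(K,C)$ with $\Sha^{d+1}(K,\tilde{C})^D$, and the map $\Sha_S^{d+1}(K,\tilde{C})^D \to \Sha^1(K,C)$ is the Pontryagin dual of the tautological inclusion $\Sha^{d+1}(K,\tilde{C}) \hookrightarrow \Sha_S^{d+1}(K,\tilde{C})$ between finite abelian groups (Lemma \ref{ShaFinite}); hence it is surjective.

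The main work lies in exactness at $\Sha_S^{d+1}(K,\tilde{C})^D$. I plan to start from the short exact sequence
\[
0 \to \Sha^{d+1}(K,\tilde{C}) \to \Sha_S^{d+1}(K,\tilde{C}) \to A \to 0,
\]
where $A$ is the image of $\Sha_S^{d+1}(K,\tilde{C})$ in $\bigoplus_{v\in S} H^{d+1}(K_v,\tilde{C})$. Dualizing this sequence of finite groups and invoking local duality (Theorem \ref{LocalDuality}), one sees that $A^D$ embeds into $\prod_{v\in S} H^0(K_v,C)^{\wedge}$ and realizes exactly the kernel of $\Sha_S^{d+1}(K,\tilde{C})^D \to \Sha^{d+1}(K,\tilde{C})^D$. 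By Lemma \ref{Lemma2}, the image of $\prod_{v\in S} H^0(K_v,C)^\wedge$ in $\Sha_S^{d+1}(K,\tilde{C})^D$ coincides with that of $\prod_{v\in S} \mathbb{H}^0(K_v,C)$. The hard step --- and this is what I expect to be the main obstacle --- is to identify this common image with the image of $\prod_{v\in S} G(K_v)$. For this I would exploit the special covering: the cokernel of the abelianization map $G(K_v) \to \mathbb{H}^0(K_v,C)$ is controlled by $H^1(K_v,\Gsc)$, and one traces through the commutative diagram of type (\ref{DiagramInLemma3}), using Hypothesis \ref{hypo:GscWA} (quasi-trivial maximal torus in $\Gsc$) to ensure that the offending contributions vanish in $\Sha_S^{d+1}(K,\tilde{C})^D$.

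For part~(ii), the plan is to pass to the inverse limit over finite $S \subset X^{(1)}$. By definition of the product topology, $\varprojlim_S \overline{G(K)}_S = \overline{G(K)}$ and $\varprojlim_S \prod_{v\in S} G(K_v) = \prod_{v\in X^{(1)}} G(K_v)$, while $\Sha_\omega^{d+1}(K,\tilde{C}) = \bigcup_S \Sha_S^{d+1}(K,\tilde{C})$ yields, by Pontryagin duality, $\Sha_\omega^{d+1}(K,\tilde{C})^D = \varprojlim_S \Sha_S^{d+1}(K,\tilde{C})^D$. The transition maps $\Sha_{S'}^{d+1}(K,\tilde{C})^D \twoheadrightarrow \Sha_S^{d+1}(K,\tilde{C})^D$ for $S \subseteq S'$ are surjections of finite groups, so the inverse system is Mittag--Leffler and exactness propagates to the limit; the term $\Sha^1(K,C)$ is independent of $S$ and remains on the right.
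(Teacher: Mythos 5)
Your overall architecture for part~(i) matches the paper's: reduce to a special covering via Lemma~\ref{Lem1}, get exactness at the first two spots from Lemma~\ref{FirstExact}, and handle the last two spots by dualizing $0 \to \Sha^{d+1}(\tilde C) \to \Sha_S^{d+1}(\tilde C) \to \prod_{v\in S}\mathbb H^{d+1}(K_v,\tilde C)$ and using the local and global duality theorems together with Lemma~\ref{Lemma2}. However, the step you yourself flag as the main obstacle --- showing that $\prod_{v\in S} G(K_v)$ and $\prod_{v\in S}\mathbb H^0(K_v,C)$ have the same image in $\Sha_S^{d+1}(\tilde C)^D$ --- is not actually carried out, and your proposed route (chase the diagram and argue that the ``offending contributions'' from $H^1(K_v,\Gsc)$ die in $\Sha_S^{d+1}(\tilde C)^D$) is not the mechanism the paper uses. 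The paper simply invokes the surjectivity of the local abelianization map $\mrm{ab}^0_v\colon G(K_v)\to\mathbb H^0(K_v,C)$, citing \cite{Tia21WA}*{Lemma~3.6}; once that surjectivity is available there is nothing left to kill. Your suggested argument would in effect have to reprove that surjectivity (equivalently, control $H^1(K_v,\Gsc)$ over the $(d+1)$-local fields $K_v$), and as written it is not clear that it closes.

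The real gap is in part~(ii). You claim that because the transition maps $\Sha_{S'}^{d+1}(\tilde C)^D \twoheadrightarrow \Sha_S^{d+1}(\tilde C)^D$ are surjections of finite groups, ``exactness propagates to the limit.'' This is not a valid inference for exactness at the third term: to conclude
\[
\Im\Bigl(\textstyle\prod_{v\in X^{(1)}}G(K_v)\to \Sha_\omega^{d+1}(\tilde C)^D\Bigr)=\varprojlim_S \Im\Bigl(\textstyle\prod_{v\in S}G(K_v)\to \Sha_S^{d+1}(\tilde C)^D\Bigr)
\]
one would need a Mittag--Leffler condition on the kernels $\overline{G(K)}_S$ inside the (non-compact) topological groups $\prod_{v\in S}G(K_v)$, and that is precisely what is not available for free. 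The paper handles this differently: after dualizing $0\to\Sha^{d+1}(\tilde C)\to\Sha_\omega^{d+1}(\tilde C)\to\bigoplus_v\mathbb H^{d+1}(K_v,\tilde C)$, it shows the image of $\prod_v G(K_v)$ in $\Sha_\omega^{d+1}(\tilde C)^D$ is \emph{closed}. This is done by a compactness argument using the special covering: a diagram chase identifies $\prod_v G(K_v)/\overline{G(K)}$ with a quotient of the profinite group $\prod_v H^1(K_v,F_0)$, hence compact; since $\Sha_\omega^{d+1}(\tilde C)^D$ is Hausdorff, the image is closed, and combined with the density of the image of $\prod_v\mathbb H^0(K_v,C)$ in that of $\prod_v\mathbb H^0(K_v,C)^\wedge$ this identifies the image with the kernel of $\Sha_\omega^{d+1}(\tilde C)^D\to\Sha^1(C)$. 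Your proposal misses this topological ingredient entirely; without it, exactness at $\Sha_\omega^{d+1}(\tilde C)^D$ does not follow.
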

\begin{proof} 
    \hfill
    \begin{enumerate}
        \item[(i)] Given Lemma \ref{FirstExact}, 
it only remains to show the following sequence 
$$\prod_{v \in S} G(K_v) \to \Sha^{d+1}_S(K,\tilde{C})^D \to \Sha^1(K,C) \to 0$$
is exact.

Recall (\ref{seq:ConstuctionOfArrow}) for the construction of the morphism $\prod_{v \in S}G(K_v) \to \Sha^{d+1}_S(K,\tilde{C})^D$.
By \cite{Tia21WA}*{Lemma 3.6}, the morphism $\mrm{ab}^0$ is surjective. Furthermore, 
by the perfect pair given by Theorem \ref{LocalDuality} and Lemma \ref{Lem1},
we can see that
$$\Im\left( \prod_{v \in S}G(K_v) \to \Sha^{d+1}_S(K,\tilde{C})^D \right)= \Im\left( \left(\prod_{v \in S}\mathbb{H}^{d+1}(K_v,\tilde{C})  \right)^D \to \Sha^{d+1}_S (K,\tilde{C})^D \right). $$

From the definition of Tate--Shafarevich groups,
there is an exact sequence $$0 \to \Sha^{d+1}(K,\tilde{C}) \to \Sha_S^{d+1}(K, \tilde{C}) \to \prod_{v \in S} \mathbb{H}^{d+1}(K_v, \tilde{C}).$$
By taking dualities, 
there is an exact sequence
$$\left( \prod_{v \in S}\mathbb{H}^{d+1}(K_v,\tilde{C}) \right)^{D} \to \Sha_S^{d+1}(K,\tilde{C})^D \to \Sha^{d+1}(K,\tilde{C})^{D} \to 0.$$
Here by \cref{GlobalDuality} we have 
$$\Sha^{d+1}(K,\tilde{C})^D \stackrel{\sim}{\to} \Sha^1(K,C). $$
Therefore, we have established the exact sequence
$$\prod_{v \in S} G(K_v) \to \Sha^{d+1}_S(K,\tilde{C})^D \to \Sha^1(K, C) \to 0. $$
    
    \item[(ii)] Passing to projective limit in (\ref{Seq:FiniteMainTheoremInChapProof}) over all finite subset $S \subset X^{(1)}$, we obtain an exact sequence
    $$1 \rightarrow \overline{G(K)} \rightarrow \prod_{v\in X^{(1)}}G(K_v) \rightarrow \Sha_\omega^{d+1}(K,\tilde{C})^D. $$
    Dualizing the exact sequence of abelian groups
    \[
    1 \to \Sha^{d+1}(K, \tilde{C}) \to \Sha^{d+1}_{\omega}(K, \tilde{C}) \to \bigoplus_{v \in X^{(1)}} \mathbb{H}^{d+1}(K_v,\tilde{C})
    \]
    yields an exact sequence of abelian groups via local and global dualities (\cref{LocalDuality} and \cref{GlobalDuality}):
    \[
    \prod_{x \in X^{(1)}}\mathbb{H}^{0}(K_v,C)^{\wedge} \to \Sha^{d+1}_{\omega}(K, \tilde{C})^D \to \Sha^{1}(K, C)^D \to 0.
    \]
    Since $G(K_v) \to \mathbb{H}^0(K_v,C)$ is surjective and $\prod_{v \in X^{(1)}}\mathbb{H}^0(K_v,C)$ is dense in $\prod_{v \in X^{(1)}}\mathbb{H}^0(K_v,C)$, it will be sufficient to show the image of $\prod_{v \in X^{(1)}}G(K_v)$ is closed in $\Sha^{d+1}_{\omega}(\tilde{C})^D$. Recall that $H^1(K, T_0) = H^1(K_v, T_0) = 0$ by Lemma \ref{Cohomology1VanishForQuasiTrivial},
    in view of diagram (\ref{DiagramInLemma3})
    
\begin{center}
\begin{tikzcd} 
G_0(K)\arrow[r]\arrow[d]&G(K)\arrow[r]\arrow[d]&H^1(K,F_0)\arrow[r]\arrow[d]& 0\\
\displaystyle\prod_{v \in X^{(1)}}G_0(K_v)\arrow[r]&\displaystyle\prod_{v\in X^{(1)}}G(K_v)
\arrow[r]&\displaystyle\prod_{v\in X^{(1)}}H^1(K_v,F_0)
 \arrow[r] & 0.
\end{tikzcd}
\end{center}
By a diagram chase, we obtain the quotient of $\prod_{v \in X^{(1)}}G(K_v)$ by $\overline{G(K)}$ is compact since it is isomorphic to the quotient of profinite group $\prod_{v \in X^{(1)}}H^1(K_v,F_0)$ by $\overline{H^1(K,F_0)}$ (which means its closure in the former group with respect to the profinite topology). Since the group $\Sha^{d+1}(K,\tilde{C})^D$ is profinite and thus Hausdorff, the image of $\prod_{v \in X^{(1)}}G(K_v)$ in $\Sha^{d+1}(K,\tilde{C})^D$ is closed, which completes the proof as desired.
\qedhere
\end{enumerate}
\end{proof}
 
\begin{bibdiv}
\begin{biblist}
\bibselect{CF}
\end{biblist}
\end{bibdiv}

\vspace{15pt}

\begin{center}
\begin{tabular}{l@{\hspace{8em}}l} 
   \textbf{Zhongda LI}, \textbf{Haoxiang PAN} & \textbf{Che LIU}   \\ 
   School of Mathematical Science & School of Mathematical Science \\ 
   Peking University & Fudan University  \\ 
   Beijing 100000, China & Shanghai 200433, China  \\
   Email: stbylyz@stu.pku.edu.cn & Email: cheliu22@m.fudan.edu.cn \\
   \quad \quad ~ ~ p135146725@stu.pku.edu.cn
\end{tabular}
\end{center}
\end{document}